\newtheorem{theorem}{Theorem}
\newtheorem{corollary}[theorem]{Corollary}
\newtheorem{lemma}[theorem]{Lemma}
\newtheorem{remark}[theorem]{Remark}
\newenvironment{proof}[1][Proof]{\noindent\textbf{#1.} }{\ \rule{0.5em}{0.5em}}
\begin{document}

\begin{center}
{\Large Periodic and Non-Periodic Solutions of a Ricker-type}

\smallskip

{\Large Second-Order Equation with Periodic Parameters}\footnote{Journal of Difference
Equations and Applications, DOI 10.1080/10236198.2016.1187142}

\medskip

N. LAZARYAN and H. SEDAGHAT \footnote{Department of Mathematics, Virginia
Commonwealth University, Richmond, VA \ 23284, USA;
\par
Email: hsedagha@vcu.edu}
\end{center}

\medskip

\begin{abstract}
We study the dynamics of the positive solutions of the exponential difference
equation
\[
x_{n+1}=x_{n-1}e^{a_{n}-x_{n-1}-x_{n}}%
\]

\noindent where the sequence $\{a_{n}\}$ is periodic. We find that
qualitatively different dynamics occurs depending on whether the period $p$ of
$\{a_{n}\}$ is odd or even. If $p$ is odd then periodic and non-periodic
solutions coexist (with different initial values) if the amplitudes of the
terms $a_{n}$ are allowed to vary over a sufficiently large range. But if $p$
is even then all solutions converge to an asymptotically stable limit cycle of
period $p$ if either all the odd-indexed or all the even-indexed terms of
$\{a_{n}\}$ are less than 2, and the sum of the even terms of $\{a_{n}\}$ does
not equal the sum of its odd terms. The key idea in this analysis is a
semiconjugate factorization of the above equation into a triangular system of
two first-order equations.

\end{abstract}

\medskip

\section{Introduction}

We study the behavior of solutions of the second-order difference equation%
\begin{equation}
x_{n+1}=x_{n-1}e^{a_{n}-x_{n-1}-x_{n}} \label{pce}%
\end{equation}
where the parameter $\{a_{n}\}$ is a periodic sequence of real numbers. This
equation is a special case of a stage-structured population model with a
Ricker-type recruitment function; see \cite{LS}. For more information and
additional Ricker-type models see, e.g. \cite{Liz}, \cite{LEO}, \cite{R} and
\cite{Z}.

Equation (\ref{pce}) has a rich variety of periodic and non-periodic
solutions. It exhibits coexisting periodic solutions if the range of
variation, or amplitude of $a_{n}$ is limited. We also show that an expanded
range or greater amplitudes for $a_{n}$ leads to the occurrence of coexisting
non-periodic solutions (including chaotic solutions) as well.

In addition, there is an unexpected qualitative difference between the
behavior of solutions when the period $p$ of $\{a_{n}\}$ is odd and when $p$
is even. When $p$ is odd different periodic and non-periodic stable solutions
are generated from different pairs of initial values; see Corollary \ref{op}
and Theorem \ref{oddgen} below. But if $p$ is even and the amplitude of
$a_{n}$ is less than 2 then Theorem \ref{peven} shows that asymptotically
stable $p$-cycles occur in a generic fashion, i.e. independently of initial values.

Such differences in behavior among the solutions of (\ref{pce}) are readily
explained by a semiconjugate factorization of (\ref{pce}) into a triangular
system of two first-order difference equations. The latter pair of equations
determine the full structure of (\ref{pce}) and allow us to explain the
aforementioned variety of behaviors that its solutions exhibit as follows: One
of the two first-order equations has periodic, hence bounded solutions when
$p$ is odd; see Lemma \ref{ptk}. However, if $p$ is even then the solutions of
the same first-order equation are unbounded (except for a boundary case); see
Lemma \ref{pet} and the remarks following it. Thus the corresponding
\textit{bounded} solutions of (\ref{pce}) \textquotedblleft forget" the
initial values and approach a single asymptotically stable solution.

The paper is divided into sections that highlight the natural dichotomy that
exists between odd and even periods. Detailed calculations that appear as a
sequence of lemmas are necessary to prove the aforementioned statements. A
careful review of the lemmas may be deferred until after Corollary \ref{op},
Theorem \ref{oddgen} and Theorem \ref{peven} have been examined along with the
remarks and figures that illustrate their statements.

\section{Order reduction}

It is convenient to note that if the initial values $x_{-1},x_{0}$ are
positive then $x_{n}>0$ for all $n\geq1$ and
\[
x_{n+1}<x_{n-1}e^{a_{n}-x_{n-1}}\leq e^{a_{n}}\frac{1}{e}=e^{a_{n}-1}%
\]

Thus the following result is obvious.

\begin{lemma}
\label{bd}Let $\{a_{n}\}$ be a sequence of real numbers that is bounded from
above with $\sup_{n}a_{n}=a$. If $x_{-1},x_{0}>0$ then the corresponding
solution $\{x_{n}\}$ of (\ref{pce}) is bounded and for all $n$%
\begin{equation}
0<x_{n}<e^{a-1}. \label{b}%
\end{equation}

\end{lemma}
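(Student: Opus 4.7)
The plan is to establish positivity and the explicit upper bound as two essentially independent observations, both immediate from the structure of the recursion. First I would handle positivity by a one-line induction: since $e^{a_n - x_{n-1} - x_n} > 0$ always, the assumption $x_{-1}, x_0 > 0$ propagates forward because $x_{n+1}$ is the product of $x_{n-1}$ and a positive exponential. Thus $x_n > 0$ for every $n \geq -1$, which gives the left-hand inequality in (\ref{b}).

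For the upper bound, the single ingredient I would use is the elementary calculus fact that the function $\varphi(t) = t e^{-t}$ attains its maximum on $(0,\infty)$ at $t=1$, with value $\varphi(1) = e^{-1}$. Given any $n \geq 0$, positivity of $x_n$ gives $e^{-x_n} < 1$, which can simply be dropped from the recursion to yield
\[
x_{n+1} = x_{n-1}\,e^{a_n - x_{n-1} - x_n} < e^{a_n}\bigl(x_{n-1}e^{-x_{n-1}}\bigr) \leq e^{a_n} \cdot e^{-1} \leq e^{a-1},
\]
which is exactly the chain of inequalities displayed just before the lemma statement. This yields the right-hand inequality in (\ref{b}) for all $n \geq 1$, and hence the boundedness of $\{x_n\}$.

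There is essentially no obstacle here: the entire content reduces to the two inputs $\sup_{t > 0} t e^{-t} = e^{-1}$ and $e^{-x_n} < 1$. The contribution of the proof, relative to the calculation already shown in the preceding paragraph, is simply to note that positivity is automatic from the recursion and that the bound on $\varphi$ converts the pointwise estimate $x_{n+1} < e^{a_n - 1}$ into the uniform estimate $x_n < e^{a-1}$ via $a_n \leq a$.
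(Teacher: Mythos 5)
Your proposal is correct and is essentially the paper's own argument: the authors display the same chain $x_{n+1}<x_{n-1}e^{a_n-x_{n-1}}\leq e^{a_n}\cdot e^{-1}=e^{a_n-1}$ immediately before the lemma and declare the result obvious, with positivity following by the same induction you describe. Your remark that the bound is established for $n\geq 1$ (the arbitrary initial values need not satisfy it) is a fair reading of what the computation actually gives.
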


The study of (\ref{pce}) is facilitated by the fact that it admits a
semiconjugate factorization that decomposes it into two equations of order
one. Following \cite{FSOR}, we define%
\[
t_{n}=\frac{x_{n}}{x_{n-1}e^{-x_{n-1}}}%
\]
for each $n\geq1$ and note that
\[
t_{n+1}t_{n}=\frac{x_{n+1}}{x_{n}e^{-x_{n}}}\frac{x_{n}}{x_{n-1}e^{-x_{n-1}}%
}=\frac{x_{n+1}}{x_{n-1}e^{-x_{n-1}-x_{n}}}=e^{a_{n}}%
\]
or equivalently,%
\begin{equation}
t_{n+1}=\frac{e^{a_{n}}}{t_{n}}. \label{sc1}%
\end{equation}

Now%
\begin{equation}
x_{n+1}=e^{a_{n}}x_{n-1}e^{-x_{n-1}}e^{-x_{n}}=e^{a_{n}}\frac{x_{n}}{t_{n}%
}e^{-x_{n}}=\frac{e^{a_{n}}}{t_{n}}x_{n}e^{-x_{n}}=t_{n+1}x_{n}e^{-x_{n}}
\label{sc2}%
\end{equation}

The pair of equations (\ref{sc1}) and (\ref{sc2}) constitute the semiconjugate
factorization of (\ref{pce}):%
\begin{align}
t_{n+1}  &  =\frac{e^{a_{n}}}{t_{n}},\quad t_{0}=\frac{x_{0}}{x_{-1}%
e^{-x_{-1}}}\label{star1}\\
x_{n+1}  &  =t_{n+1}x_{n}e^{-x_{n}} \label{star2}%
\end{align}

Every solution $\{x_{n}\}$ of (\ref{pce}) is generated by a solution of the
system (\ref{star1})-(\ref{star2}). Using the initial values $x_{-1},x_{0}$ we
obtain a solution $\{t_{n}\}$ of the first-order equation (\ref{star1}),
called the factor equation. This solution is then used to obtain a solution of
the cofactor equation (\ref{star2}) and thus also of (\ref{pce}). The system
(\ref{star1})-(\ref{star2}) is said to be triangular basically because one
equation (i.e. the factor equation) is independent of the other; see \cite{Sm}
for more information on triangular systems.

For an arbitrary sequence $\{a_{n}\}$ and a given $t_{0}\not =0$ by iterating
(\ref{star1}) we obtain%

\[
t_{1}=\frac{e^{a_{0}}}{t_{0}},\quad t_{2}=\frac{e^{a_{1}}}{t_{1}}%
=t_{0}e^{-a_{0}+a_{1}},\quad t_{3}=\frac{e^{a_{2}}}{t_{2}}=\frac{1}{t_{0}%
}e^{a_{0}-a_{1}+a_{2}},\quad t_{4}=\frac{e^{a_{3}}}{t_{3}}=t_{0}%
e^{-a_{0}+a_{1}-a_{2}+a_{3}},\cdots
\]

This pattern of development implies the following result.

\begin{lemma}
\label{bsc}Let $\{a_{n}\}$ be an arbitrary sequence of real numbers and
$t_{0}\not =0.$

(a) The general solution of (\ref{star1}) is given by
\begin{equation}
t_{n}=t_{0}^{(-1)^{n}}e^{(-1)^{n}s_{n}},\;\;n=1,2,... \label{gen1}%
\end{equation}

where%
\begin{equation}
s_{n}=\sum_{j=1}^{n}(-1)^{j}a_{j-1} \label{gen2}%
\end{equation}

(b) For all $n,$%
\[
x_{n}\leq\frac{1}{e}t_{n}.
\]

\end{lemma}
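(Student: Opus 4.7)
For part (a), the plan is to verify the formula by induction on $n$. The base case $n=1$ is immediate: from $t_1 = e^{a_0}/t_0$ and $s_1 = -a_0$, one checks $t_0^{(-1)^1} e^{(-1)^1 s_1} = t_0^{-1} e^{a_0}$, matching $t_1$. For the inductive step, assume (\ref{gen1}) holds at index $n$. Then the factor equation (\ref{star1}) gives
\[
t_{n+1} = \frac{e^{a_n}}{t_n} = t_0^{-(-1)^n} e^{a_n - (-1)^n s_n} = t_0^{(-1)^{n+1}} e^{(-1)^{n+1} s_n + a_n}.
\]
The only thing left to verify is that the exponent equals $(-1)^{n+1} s_{n+1}$. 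From the definition (\ref{gen2}), $s_{n+1} = s_n + (-1)^{n+1} a_n$, so $(-1)^{n+1} s_{n+1} = (-1)^{n+1} s_n + a_n$, which matches. This closes the induction.

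For part (b), the key observation is the elementary inequality $u e^{-u} \le 1/e$ for all real $u$, with equality at $u=1$ (so the map behind the Ricker recruitment is bounded by $1/e$). First I would check that positivity of $x_{-1}, x_0$ forces $t_0 > 0$ and, by induction on the factor equation, $t_n > 0$ for every $n$. Then applying the cofactor equation (\ref{star2}) gives
\[
x_{n+1} = t_{n+1} \, x_n e^{-x_n} \le \frac{t_{n+1}}{e},
\]
which is the claim for $n \ge 1$. For $n=0$ the bound $x_0 \le t_0/e$ reduces, after substituting the definition of $t_0$, to the inequality $e^{x_{-1}-1} \ge x_{-1}$, which follows at once from the tangent-line bound $e^{u-1} \ge u$ (standard convexity of the exponential).

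I do not expect any serious obstacle here: part (a) is a routine induction that essentially re-does the pattern already displayed in the paragraph preceding the lemma, and part (b) is a one-line consequence of the universal bound on the Ricker map once positivity of $\{t_n\}$ has been noted. The mildest care-point is handling $n=0$ in part (b), since $x_0$ is an initial value rather than a value produced by (\ref{star2}); this is why the auxiliary inequality $e^{u-1} \ge u$ is invoked separately.
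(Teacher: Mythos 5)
Your proof is correct and takes essentially the same approach as the paper: a routine induction via the factor equation for (a), and the universal bound $xe^{-x}\le 1/e$ applied to the cofactor equation for (b). Your extra attention to the $n=0$ case and to the positivity of $\{t_n\}$ is a minor refinement of details the paper leaves implicit (note that $x_0=t_0x_{-1}e^{-x_{-1}}$ holds by the very definition of $t_0$, so the tangent-line inequality is not really needed).
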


\begin{proof}
(a) For $n=1,$ (\ref{gen1}) yields%
\[
t_{1}=t_{0}^{-1}e^{-s_{1}}=\frac{1}{t_{0}}e^{-(-a_{0})}=\frac{e^{a_{0}}}%
{t_{0}}%
\]
which is true. Suppose that (\ref{gen1}) is true for $n\leq k.$ Then by
(\ref{gen1}) and (\ref{gen2})%
\[
t_{0}^{(-1)^{k+1}}e^{(-1)^{k+1}s_{k+1}}=\frac{1}{t_{0}^{(-1)^{k}}%
e^{(-1)^{k}s_{k}}}e^{(-1)^{2k+2}a_{k}}=\frac{e^{a_{k}}}{t_{k}}=t_{k+1}%
\]
which is again true and the proof is now complete by induction.

(b) This is an immediate consequence of (\ref{star2}) and the fact that
$xe^{-x}\leq1/e.$
\end{proof}

\medskip

In the sequel, whenever the sequence $\{a_{n}\}$ has period $p$ the following
quantity plays an essential role:%

\begin{equation}
\sigma=s_{p}=\sum_{j=1}^{p}(-1)^{j}a_{j-1}=-a_{0}+a_{1}-a_{2}+\ldots-a_{p-1}
\label{sig}%
\end{equation}

The following special-case result will be useful later on.

\begin{lemma}
\label{genper} Assume that $\{a_{n}\}$ is periodic with minimal period $p$. If
$\sigma=0$ and $t_{0}=1$, then $\{t_{n}\}$ is periodic with period $p$.
\end{lemma}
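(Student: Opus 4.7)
The plan is to derive the conclusion directly from the explicit formula for $t_n$ provided in Lemma~\ref{bsc}(a). With $t_0=1$ that formula collapses to
\[
t_n = e^{(-1)^n s_n}, \qquad s_n = \sum_{j=1}^n (-1)^j a_{j-1},
\]
so proving $t_{n+p}=t_n$ reduces to proving the single exponent identity $(-1)^{n+p}s_{n+p} = (-1)^n s_n$.

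First I would establish the key shift identity
\[
s_{n+p} = \sigma + (-1)^p s_n.
\]
This is obtained by splitting $s_{n+p}=\sum_{j=1}^{n+p}(-1)^j a_{j-1}$ as $s_p$ plus the tail $\sum_{j=p+1}^{n+p}(-1)^j a_{j-1}$, then substituting $k=j-p$ in the tail. Under this substitution $(-1)^j=(-1)^{k+p}=(-1)^p(-1)^k$, and $p$-periodicity of $\{a_n\}$ gives $a_{j-1}=a_{k+p-1}=a_{k-1}$. The tail therefore equals $(-1)^p s_n$, while the leading block is $s_p=\sigma$ by definition.

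Next I would invoke the hypothesis $\sigma=0$, which simplifies the shift identity to $s_{n+p}=(-1)^p s_n$. Then
\[
(-1)^{n+p}s_{n+p} = (-1)^{n+p}\cdot(-1)^p s_n = (-1)^{n+2p} s_n = (-1)^n s_n,
\]
so $t_{n+p}=e^{(-1)^{n+p}s_{n+p}}=e^{(-1)^n s_n}=t_n$ for every $n\ge 0$, which is exactly $p$-periodicity of $\{t_n\}$.

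There is really no hard step here; the whole argument is bookkeeping with indices, and the only point that needs minor care is the parity accounting in the shift identity, where one must track both the explicit $(-1)^p$ that appears from reindexing the tail and the extra $(-1)^p$ that cancels it when combined with the outer $(-1)^{n+p}$ in the formula for $t_{n+p}$. The uniformity of the argument in the parity of $p$ is a useful sanity check: when $p$ is even the identity reads $s_{n+p}=s_n$, while when $p$ is odd it reads $s_{n+p}=-s_n$, and in both cases the outer sign flip in $t_n$ restores equality.
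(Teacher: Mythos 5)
Your proof is correct and follows essentially the same route as the paper: both apply the explicit formula of Lemma \ref{bsc}(a) and split $s_{n+p}$ into $s_p=\sigma$ plus a reindexed tail. The only difference is cosmetic --- you carry the factor $(-1)^p$ uniformly, whereas the paper treats the even and odd $p$ cases separately.
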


\begin{proof}
If $\sigma=0$, then by (\ref{gen1}) and (\ref{gen2}) in Lemma \ref{bsc} we
have:%
\[
t_{p}=t_{0}^{(-1)^{p}}e^{(-1)^{p}s_{p}}=e^{(-1)^{p}\sigma}=1=t_{0}%
\]

and%
\[
t_{n+p}=t_{0}^{(-1)^{n+p}}e^{(-1)^{n+p}s_{n+p}}=e^{(-1)^{n+p}s_{n+p}}.
\]

Now since $\sigma=0,$%
\[
s_{n+p}=\sum_{j=1}^{n+p}(-1)^{j}a_{j-1}=\sum_{j=1}^{p}(-1)^{j}a_{j-1}%
+\sum_{j=p+1}^{p+n}(-1)^{j}a_{j-1}=\sum_{j=p+1}^{p+n}(-1)^{j}a_{j-1}%
\]

If $p$ is even, then%
\[
\sum_{j=p+1}^{p+n}(-1)^{j}a_{j-1}=-a_{p}+a_{p+1}+\cdots+(-1)^{n+p}%
a_{n+p-1}=-a_{0}+a_{1}+\cdots+(-1)^{n}a_{n-1}=s_{n}%
\]

so
\[
t_{n+p}=e^{(-1)^{n+p}s_{n+p}}=e^{(-1)^{n}s_{n}}=t_{n}.
\]

If $p$ is odd, then%
\[
\sum_{j=p+1}^{p+n}(-1)^{j}a_{j-1}=a_{p}-a_{p+1}+\cdots+(-1)^{n+p}%
a_{n+p-1}=a_{0}-a_{1}+\cdots-(-1)^{n}a_{n-1}=-s_{n}%
\]

so%
\[
t_{n+p}=e^{(-1)^{n+p}s_{n+p}}=e^{-(-1)^{n}(-s_{n})}=e^{(-1)^{n}s_{n}}=t_{n}.
\]

and the proof is complete.
\end{proof}

\medskip

Note that the solution $\{t_{n}\}$ of (\ref{star1}) in Lemma \ref{bsc} need
not be bounded even if $\{a_{n}\}$ is a bounded sequence. The next result
expresses a useful fact for this case.

\begin{lemma}
\label{z}Assume that $\{a_{n}\}$ is bounded from above and $x_{0}, x_{-1}>0$.
If the sequence $\{t_{n}\}$ from $t_{0}$ given in (\ref{star1}) is unbounded
then some subsequence of the corresponding solution $\{x_{n}\}$ of (\ref{pce})
converges to 0.
\end{lemma}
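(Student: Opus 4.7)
The plan is to combine the a priori bound on $\{x_n\}$ given by Lemma \ref{bd} with the cofactor recursion (\ref{star2}), $x_{n+1}=t_{n+1}x_n e^{-x_n}$. Because $\{a_n\}$ is bounded above with $\sup_n a_n=a$, Lemma \ref{bd} yields a uniform bound $x_n< M:=e^{a-1}$ for all $n\geq 1$. So no matter how large $t_{n+1}$ becomes, the quantity $x_{n+1}=t_{n+1}x_n e^{-x_n}$ is forced to stay below $M$.

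I would then extract a subsequence $\{t_{n_k}\}$ with $t_{n_k}\to\infty$, which exists by the unboundedness hypothesis (we may assume $n_k\geq 1$). Rewriting (\ref{star2}) as
\[
x_{n_k-1}e^{-x_{n_k-1}}=\frac{x_{n_k}}{t_{n_k}}\leq\frac{M}{t_{n_k}},
\]
the right-hand side tends to $0$, so $x_{n_k-1}e^{-x_{n_k-1}}\to 0$.

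The remaining step is to conclude $x_{n_k-1}\to 0$. The function $g(x)=xe^{-x}$ is continuous on the compact interval $[0,M]$, strictly positive on $(0,M]$, and vanishes only at $x=0$. Since $\{x_{n_k-1}\}\subset(0,M]$ is bounded, every convergent sub-subsequence has a limit $L\in[0,M]$ satisfying $g(L)=0$, hence $L=0$; therefore the whole sequence $x_{n_k-1}\to 0$, producing the desired subsequence of $\{x_n\}$ converging to $0$.

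There is no real obstacle here; the proof is essentially a pigeon-hole argument driven by the compatibility of the two bounds. The only subtlety worth noting is that one must use a bounded interval (not all of $[0,\infty)$) when inverting the condition $g(x)\approx 0\Rightarrow x\approx 0$, since $g$ also vanishes at infinity; Lemma \ref{bd} supplies exactly that boundedness.
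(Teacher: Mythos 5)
Your proof is correct, but it runs in the opposite direction from the paper's. The paper combines Lemma \ref{bsc}(b) (the bound $x_{n}\leq t_{n}/e$, which comes from $xe^{-x}\leq 1/e$ in the cofactor equation) with the reciprocal structure of the factor equation, $t_{n_k+1}=e^{a_{n_k}}/t_{n_k}$, to get $x_{n_k+1}\leq \tfrac{1}{e}t_{n_k+1}\leq e^{a-1}/t_{n_k}\rightarrow 0$; the subsequence it exhibits is $\{x_{n_k+1}\}$, the terms just \emph{after} the indices where $t_n$ blows up. You instead solve the cofactor equation (\ref{star2}) backwards, obtaining $x_{n_k-1}e^{-x_{n_k-1}}=x_{n_k}/t_{n_k}\rightarrow 0$, and then invert $g(x)=xe^{-x}$ near $0$; your subsequence is $\{x_{n_k-1}\}$, the terms just \emph{before}. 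The paper's route is slightly shorter because $t_{n_k}\rightarrow\infty$ makes $t_{n_k+1}\rightarrow 0$ immediate, so no inversion of $xe^{-x}$ is needed; your route trades that for a compactness argument (correctly handled --- you rightly note that the boundedness from Lemma \ref{bd} is essential, since $g$ also vanishes at infinity), and has the mild virtue of using only the a priori bound and the cofactor relation, not Lemma \ref{bsc}(b) or the factor equation. The only pedantic caveat is that the single term $x_{n_k-1}=x_0$ (when $n_k=1$) need not lie in $(0,M]$, but discarding finitely many indices disposes of this.
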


\begin{proof}
By the hypotheses, $\sup_{n}a_{n}=a<\infty$ and there is a subsequence
$\{t_{n_{k}}\}$ such that $\lim_{k\rightarrow\infty}t_{n_{k}}=\infty.$ By
(\ref{star1}) and Lemma \ref{bsc}(b)%
\[
x_{n_{k}+1}\leq\frac{1}{e}t_{n_{k}+1}\leq\frac{e^{a-1}}{t_{n_{k}}}%
\]

Therefore,%
\[
\lim_{k\rightarrow\infty}x_{n_{k}+1}=\lim_{k\rightarrow\infty}\frac{e^{a-1}%
}{t_{n_{k}}}=0.
\]

\end{proof}

\section{The odd period case}

The dynamics of (\ref{pce}) depends critically on whether the period of the
parameter sequence $\{a_{n}\}$ is odd or even. In this section we consider the
odd case and the nature of solutions of (\ref{sc1}) in this case.

\begin{lemma}
\label{ptk}Suppose that $\{a_{n}\}$ is sequence of real numbers with minimal
odd period $p\geq1$ and let $\{t_{n}\}$ be a solution of (\ref{sc1}).

(a) $\{t_{n}\}$ has period $2p$ with a complete cycle $\{t_{0},t_{1}%
,\ldots,t_{2p-1}\}$ where $t_{k}$ is given by (\ref{gen1}) with
\begin{equation}
s_{k}=\left\{
\begin{array}
[c]{l}%
\sum_{j=1}^{k}(-1)^{j}a_{j-1},\quad\text{if }1\leq k\leq p\\
\sum_{j=k}^{2p-1}(-1)^{j}a_{j-p},\quad\text{if }p+1\leq k\leq2p-1
\end{array}
\right.  \label{sk}%
\end{equation}

(b) If $t_{0}=e^{-\sigma/2}$ then $\{t_{n}\}$ is periodic with period $p$.
\end{lemma}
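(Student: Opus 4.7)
The plan is to reduce everything to algebraic identities on the partial alternating sums $s_n$ from (\ref{gen2}), using the closed form $t_n=t_0^{(-1)^n}e^{(-1)^n s_n}$ already supplied by Lemma \ref{bsc}(a). The engine behind both parts is the observation that for odd $p$ the sign factor satisfies $(-1)^{j+p}=-(-1)^j$, while the periodicity of $\{a_n\}$ gives $a_{j+p-1}=a_{j-1}$; together these make consecutive blocks of length $p$ in any alternating sum over the $a_i$ cancel out.

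For part (a) I would first prove the key identity $s_{n+2p}=s_n$. Writing $s_{n+2p}-s_n=\sum_{j=n+1}^{n+2p}(-1)^j a_{j-1}$, split the range into two consecutive blocks of length $p$; the substitution $i=j-p$ turns the second block into $-\sum_{i=n+1}^{n+p}(-1)^i a_{i-1}$, exactly cancelling the first. Since $(-1)^{n+2p}=(-1)^n$, the closed form then yields $t_{n+2p}=t_n$, so $2p$ is a period. For the alternative expression in (\ref{sk}) on $p+1\le k\le 2p-1$, the same block-cancellation applied to $s_{2p}$ shows $s_{2p}=0$, hence $s_k=-\sum_{j=k+1}^{2p}(-1)^j a_{j-1}$; reindexing $i=j-1$ and using $a_i=a_{i-p}$ (valid since $i\ge p$) converts this into the claimed $\sum_{j=k}^{2p-1}(-1)^j a_{j-p}$.

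For part (b) the central identity is $s_{n+p}+s_n=\sigma$, proved by writing $s_{n+p}=s_p+\sum_{j=p+1}^{n+p}(-1)^j a_{j-1}$ and applying the same shift $i=j-p$ to see that the tail equals $-s_n$. The hypothesis $t_0=e^{-\sigma/2}$ means $t_0^{-2}=e^{\sigma}$, so that
\[
\frac{t_{n+p}}{t_n}=t_0^{(-1)^{n+p}-(-1)^n}\,e^{(-1)^{n+p}s_{n+p}-(-1)^n s_n}=t_0^{-2(-1)^n}\,e^{-(-1)^n(s_{n+p}+s_n)}=e^{(-1)^n\sigma}\,e^{-(-1)^n\sigma}=1,
\]
which gives $t_{n+p}=t_n$ for all $n\ge 0$.

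The whole argument is essentially index manipulation; the only real obstacle is keeping the parity bookkeeping straight when shifting by the odd quantity $p$, and once the two identities $s_{n+2p}=s_n$ and $s_{n+p}=\sigma-s_n$ are extracted, both parts fall out immediately from the closed form for $t_n$.
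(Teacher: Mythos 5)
Your proof is correct and follows essentially the same route as the paper: both rest on the closed form $t_n=t_0^{(-1)^n}e^{(-1)^n s_n}$ from Lemma \ref{bsc}(a) together with the shift identities for the alternating sums that come from $(-1)^{j+p}=-(-1)^j$ when $p$ is odd. The only difference is presentational: you extract $s_{n+2p}=s_n$ and $s_{n+p}=\sigma-s_n$ directly by block cancellation, whereas the paper obtains the same facts by explicitly enumerating a full $2p$-cycle of $\{s_n\}$ via the decomposition $n=pm+i$.
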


\begin{proof}
(a) Let $\{a_{0},a_{1},\ldots,a_{p-1}\}$ be a full cycle of $a_{n}$ and define
$\sigma$ as in (\ref{sig}), i.e.
\[
\sigma=-a_{0}+a_{1}-a_{2}+\ldots-a_{p-1}.
\]

Since a full cycle of $a_{n}$ has an odd number of terms, expanding $s_{n}$ in
(\ref{gen2}) yields a sequence with alternating signs in terms of $\sigma$%
\[
s_{n}=\sigma-\sigma+\cdots+(-1)^{m-1}\sigma+(-1)^{m}\sum_{j=1}^{i}%
(-1)^{j}a_{j-1}%
\]
for integers $i,m$ such that $n=pm+i$, $m\geq0$ and $1\leq i\leq p$. If $m$ is
even then for $i=1,2,\ldots,p$%
\[
s_{n}=\sum_{j=1}^{i}(-1)^{j}a_{j-1}=\left\{
\begin{array}
[c]{ll}%
-a_{0} & n=pm+1\quad\text{(odd)}\\
-a_{0}+a_{1} & n=pm+2\quad\text{(even)}\\
\vdots & \vdots\\
-a_{0}+a_{1}\ldots-a_{p-1} & n=pm+p\quad\text{(odd)}%
\end{array}
\right.
\]

Similarly, if $m$ is odd then for $i=1,2,\ldots,p$%
\[
s_{n}=\sigma-\sum_{j=0}^{i}(-1)^{j}a_{j}=\left\{
\begin{array}
[c]{ll}%
\sigma+a_{0} & n=pm+1\quad\text{(even)}\\
\sigma+a_{0}-a_{1} & n=pm+2\quad\text{(odd)}\\
\vdots & \vdots\\
\sigma+a_{0}-a_{1}+\ldots-a_{p-1} & n=pm+p\quad\text{(even)}%
\end{array}
\right.
\]

The above list repeats for every consecutive pair of values of $m$ and yields
a complete cycle for $\{s_{n}\}.$ In particular, for $m=0$ we obtain for
$i=1,2,\ldots,p$%
\[
s_{n}=\sum_{j=1}^{i}(-1)^{j}a_{j-1}=\left\{
\begin{array}
[c]{ll}%
-a_{0} & n=1\\
-a_{0}+a_{1} & n=2\\
\vdots & \vdots\\
-a_{0}+a_{1}\ldots-a_{p-1} & n=p
\end{array}
\right.
\]
and for $m=1$ we obtain for $i=1,2,\ldots,p-1$%
\begin{align*}
s_{n}  &  =\sigma-\sum_{j=0}^{i}(-1)^{j}a_{j}=\left\{
\begin{array}
[c]{ll}%
a_{1}-a_{2}+\ldots-a_{p-1} & n=p+1\\
-a_{2}+\ldots-a_{p-1} & n=p+2\\
\vdots & \vdots\\
-a_{p-1} & n=2p-1
\end{array}
\right. \\
&  =\sum_{j=p+1}^{2p-1}(-1)^{j}a_{j-p}%
\end{align*}

This proves the validity of (\ref{sk}) and shows that the sequence $\{s_{n}\}$
has period $2p$. Now (\ref{gen1}) implies that $\{t_{n}\}$ also has period
$2p$ as claimed.

(b) If $\sigma=0$ then the statement follows immediately from Lemma
\ref{genper}. If $\sigma\not =0$ and $p$ is odd then%
\[
t_{p}=t_{0}^{(-1)^{p}}e^{(-1)^{p}s_{p}}=e^{\sigma/2-\sigma}=e^{-\sigma
/2}=t_{0}%
\]

In the proof of Lemma \ref{genper} it was shown that $s_{n+p}=\sigma-s_{n}$.
Thus%
\begin{align}
t_{n+p}  &  =t_{0}^{(-1)^{n+p}}e^{(-1)^{n+p}s_{n+p}}\nonumber\\
&  =e^{(-1)^{n}\sigma/2}e^{-(-1)^{n}(\sigma-s_{n}})\nonumber\\
&  =e^{-(-1)^{n}\sigma/2+(-1)^{n}s_{n}}=t_{0}^{(-1)^{n}}e^{(-1)^{n}s_{n}%
}=t_{n}\nonumber
\end{align}
and the proof is complete.
\end{proof}

\medskip

For $p=1$, Lemma \ref{ptk} implies that$\ \{t_{n}\}$ is the two-cycle%
\[
\left\{  t_{0},\frac{e^{a}}{t_{0}}\right\}
\]
where $a$ is the constant value of the sequence $\{a_{n}\}$. For $p=3$,
$\{t_{n}\}$ is the six-cycle%
\[
\left\{  t_{0},\frac{e^{a_{0}}}{t_{0}},t_{0}e^{a_{1}-a_{0}},\frac
{e^{a_{2}-a_{1}+a_{0}}}{t_{0}},t_{0}e^{a_{1}-a_{2}},\frac{e^{a_{2}}}{t_{0}%
}\right\}  .
\]

From the cofactor equation (\ref{star2}) we obtain%
\begin{align*}
x_{2n+2}  &  =t_{2n+2}x_{2n+1}e^{-x_{2n+1}}=t_{2n+2}t_{2n+1}x_{2n}\exp
(-x_{2n}-t_{2n+1}x_{2n}e^{-x_{2n}})\\
x_{2n+1}  &  =t_{2n+1}x_{2n}e^{-x_{2n}}=t_{2n+1}t_{2n}x_{2n-1}\exp\left(
-x_{2n-1}-t_{2n}x_{2n-1}e^{-x_{2n-1}}\right)
\end{align*}

For every solution $\{t_{n}\}$ of (\ref{star1}), $t_{n+1}t_{n}=e^{a_{n}}$ for
all $n$, so the even terms of the sequence $\{x_{n}\}$ satisfy%
\begin{equation}
x_{2n+2}=x_{2n}\exp\left(  a_{2n+1}-x_{2n}-t_{2n+1}x_{2n}e^{-x_{2n}}\right)
\label{reven}%
\end{equation}
and the odd terms satisfy%
\begin{equation}
x_{2n+1}=x_{2n-1}\exp\left(  a_{2n}-x_{2n-1}-t_{2n}x_{2n-1}e^{-x_{2n-1}%
}\right)  \label{rodd}%
\end{equation}

To reduce the notational clutter, let%
\begin{equation}
y_{n}=x_{2n}\;\;\;\rho_{n}=a_{2n+1}\;\;\;\mu_{n}=t_{2n+1} \label{yyn}%
\end{equation}
for $n\geq0$ and also
\begin{equation}
z_{n}=x_{2n-1}\;\;\;\zeta_{n}=a_{2n}\;\;\;\eta_{n}=t_{2n}. \label{zzn}%
\end{equation}

Then we can write (\ref{reven}) and (\ref{rodd}) as%

\begin{equation}
y_{n+1}=y_{n}e^{\rho_{n}-y_{n}-\mu_{n}y_{n}e^{-y_{n}}} \label{yn}%
\end{equation}

\begin{equation}
z_{n+1}=z_{n}e^{\zeta_{n}-z_{n}-\eta_{n}z_{n}e^{-z_{n}}} \label{zn}%
\end{equation}

The next result establishes the existence of an attracting, invariant interval
for (\ref{reven}) and (\ref{rodd}), or equivalently, (\ref{yn}) and (\ref{zn}).

\begin{lemma}
\label{invar} Let $\{a_{n}\}$ be a bounded sequence where $\inf_{n\geq0}%
a_{n}\in(0,2)$. Let $x_{0}, x_{-1}>0$ and $t_{0}$ be given as in (\ref{star1}.
Assume that the sequence $\{t_{2n+1}\}$ (respectively, $\{t_{2n}\}$) is
bounded and let $\{x_{n}\}$ be the corresponding solution of (\ref{pce}).

(a) There exists an interval $[\alpha,\beta]$ with $\alpha>0$ such that if
$x_{-1},x_{0}\in\lbrack\alpha,\beta]$ then $x_{2n}\in\lbrack\alpha,\beta]$
(respectively, $x_{2n+1}\in\lbrack\alpha,\beta]$) for $n\geq1$.

(b) For all $x_{0},x_{-1}>0$ there exists an integer $N>0$ such that
$x_{2n}\in\lbrack\alpha,\beta]$ (respectively, $x_{2n+1}\in\lbrack\alpha
,\beta]$) for all $n\geq N$.
\end{lemma}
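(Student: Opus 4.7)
The plan is to reduce both claims to two nonautonomous one-dimensional iterations and produce a trapping interval for each. For the even subsequence, (\ref{yn}) expresses $y_n = x_{2n}$ as $y_{n+1} = f_n(y_n)$ with $f_n(y) = y e^{\rho_n - y - \mu_n y e^{-y}}$, where $\rho_n = a_{2n+1} \in [b,a]$ (writing $b = \inf_n a_n > 0$ and $a = \sup_n a_n$) and $\mu_n = t_{2n+1} \in (0,M]$, $M = \sup_n t_{2n+1}$, by hypothesis. Setting $\beta := e^{a-1}$, Lemma \ref{bd} supplies the upper endpoint: $y_n < \beta$ for $n \geq 1$, and moreover $f_n(y) \leq y e^{\rho_n - y} \leq e^{a-1} = \beta$ for every $y > 0$ (the extra factor $e^{-\mu_n y e^{-y}}$ is at most $1$), so $\beta$ is invariant from above under every $f_n$.

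The real work is to pick $\alpha > 0$ small enough that $f_n([\alpha,\beta]) \subseteq [\alpha,\beta]$ uniformly in $n$; then (a) follows at once from $y_0 = x_0 \in [\alpha,\beta]$. Two complementary pointwise lower bounds on $f_n$ will do this. From $e^{-y} \leq 1$ we obtain $f_n(y) \geq y e^{b - (1+M)y}$, whose value at $y = \alpha$ is $\geq \alpha$ whenever $\alpha \leq b/(1+M)$. From $y e^{-y} \leq 1/e$ we obtain $f_n(y) \geq y e^{b - y - M/e}$, whose values at $y = \beta$ and at $y = 1$ are $\beta e^{b-\beta-M/e}$ and $e^{b-1-M/e}$ respectively. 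A sign analysis of $(\log f_n)'(y) = 1/y - 1 - \mu_n(1-y)e^{-y}$, whose non-trivial zeros solve $e^y = \mu_n y$, shows that $\min_{y \in [\alpha,\beta]} f_n(y)$ is attained at $\alpha$, $\beta$, or $y = 1$, the last occurring as an interior local minimum only when $\mu_n > e$. Choosing any positive $\alpha$ strictly smaller than each of $b/(1+M)$, $\beta e^{b-\beta-M/e}$, $e^{b-1-M/e}$, and $1$ therefore delivers the invariance.

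For (b) the first pointwise bound also does the attracting work: with the additional restriction $\alpha \leq b/(2(1+M))$, every $y_n < \alpha$ yields $y_{n+1} \geq y_n e^{b/2}$, so iterates below $\alpha$ grow by at least $e^{b/2} > 1$ per step and cannot stay below $\alpha$ forever without violating $y_n < \beta$. Once some $y_N$ enters $[\alpha,\beta]$, the invariance from (a) traps the orbit. The odd subsequence $z_n = x_{2n-1}$ driven by (\ref{zn}) is handled identically with $\zeta_n = a_{2n}$, $\eta_n = t_{2n}$ replacing $\rho_n, \mu_n$, using the hypothesis on $\{t_{2n}\}$. The main technical nuisance is the sign analysis of $(\log f_n)'$: when $\mu_n > e$ the map $f_n$ has a local maximum in $(0,1)$, a local minimum at $y = 1$, and a local maximum in $(1,\infty)$, and one must check that no further critical points occur so that the three endpoint-type estimates really control $\min_{[\alpha,\beta]} f_n$.
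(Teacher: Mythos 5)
Your proof is correct and follows essentially the same route as the paper's: $\beta=e^{a-1}$ from Lemma \ref{bd}, a uniform left endpoint $\alpha$ obtained by checking the candidate minimizers $\alpha$, $\beta$, and the critical point $1$ of the Ricker-type map (with the same dichotomy according to whether the bound on $t_{2n+1}$ exceeds $e$), and geometric growth below $\alpha$ for part (b). The only difference is cosmetic: you bound the nonautonomous maps $f_n$ uniformly and certify $f_n(\alpha)\geq\alpha$ via the explicit threshold $b/(1+M)$, whereas the paper minorizes by a single autonomous map $f$ with $\gamma=\sup\{t_{2n+1}\}+1$ and uses its fixed point $x^{\ast}$ in place of your first threshold; both yield the same invariance and attraction.
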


\begin{proof}
(a) First, note that if $x_{0},x_{-1}>0$ then $x_{n}>0$ for all $n$ and by
Lemma \ref{bd} $x_{n}\leq e^{a-1}$ for $n\geq1$ where
\[
a=\sup_{n\geq0}a_{n}.
\]

Thus if%
\[
\beta=e^{a-1}%
\]
then $x_{n}\leq\beta$ for all $n$. Next, let%
\[
\rho=\inf_{n\geq0}a_{n}\in(0,2)
\]
and consider the map
\[
f(x)=xe^{\rho-x-\gamma xe^{-x}}%
\]
where $\gamma>0$ is fixed. Now $x^{\ast}$ is a fixed point of $f$ if and only
if
\[
x^{\ast}=f(x^{\ast})=x^{\ast}e^{\rho-x^{\ast}-\gamma x^{\ast}e^{-x^{\ast}}}%
\]
which is true if and only if $\rho-x^{\ast}-\gamma x^{\ast}e^{-x^{\ast}%
}=h(x^{\ast})=0$. Since $h(0)=\rho>0$ and $h(\rho)=-\gamma\rho e^{-\rho}<0$,
there is $x^{\ast}\in(0,\rho)$ such that $h(x^{\ast})=0$. Thus $f$ has a fixed
point $x^{\ast}\in(0,\rho)$. Further $f(x)>x$ for $x\in(0,x^{\ast})$ and
$f(x)<x$ for $x\in(x^{\ast},\beta)$. If
\[
\alpha=\min\{x^{\ast},f(\beta),f(1)\}.
\]
then we now show that $[\alpha,\beta]$ is invariant under $f$, i.e.
$f(x)\in\lbrack\alpha,\beta]$ for all $x\in\lbrack\alpha,\beta].$ There are
two possible cases:

\textit{Case 1}: $\gamma\leq e$. In this case, $f(x)$ has one critical point
at $x=1$ and it is increasing in $(0,1)$ and decreasing on $(1,\infty)$.\ Thus
$f(1)$ is a global maximum and thus $\alpha\neq f(1)$. First, consider the
case where $x^{\ast}<f(\beta)<\beta$ and let $x\in\lbrack x^{\ast},\beta]$. If
$x<1$, then $f(x)>f(x^{\ast})=x^{\ast}\geq\alpha$ because $f$ is increasing on
$(0,1)$. If $x>1$, then $f(x)>f(\beta)>x^{\ast}\geq\alpha$, because $f$ is
decreasing on $(1,\beta)$. In either case $f(x)\in\lbrack\alpha,\beta]$.

Next, consider the case where $f(\beta)<x^{\ast}<\beta$ and let $x\in\lbrack
f(\beta),\beta]$. Then $f(x)>x>f(\beta)=\alpha$ for $f(\beta)<x<x^{\ast}$. On
the other hand, if $x^{\ast}<x<1$, then $f(x)>f(x^{\ast})>f(\beta)=\alpha$ and
if $x^{\ast}<1<x<\beta$, then $f(x)>f(\beta)=\alpha$. It follows that
$f(x)\in\lbrack\alpha,\beta]$ if $\gamma\leq e.$

\textit{Case 2}: $\gamma>e$. In this case, $f(x)$ has three critical points
$x^{\prime},1$ and $x^{\prime\prime}$ with $x^{\prime}<1<x^{\prime\prime}$,
where local maxima occur at $x^{\prime}$ and $x^{\prime\prime}$and a local
minimum at 1. There are three possibilities:

(i) $\alpha=x^{\ast}$. In this case, for $x^{\ast}\leq x\leq x^{\prime}$,
$f(x)\geq f(x^{\ast})=x^{\ast}=\alpha$, since $f$ is increasing on
$(0,x^{\prime})$. If $x\in(x^{\prime},x^{\prime\prime})$, then $f(x)\geq
f(1)\geq\alpha$. If $\beta\geq x^{\prime\prime}$ and $x^{\prime\prime}\leq
x\leq\beta$ then $f(x)\geq f(\beta)\geq\alpha$, since $f$ is decreasing on
$(x^{\prime\prime},\infty)$.

(ii) $\alpha=f(\beta)$. In this case, for $x\in\lbrack f(\beta),x^{\ast})$,
$f(x)>x\geq f(\beta)=\alpha$. If $x^{\ast}\leq x^{\prime}$ and $x\in\lbrack
x^{\ast},x^{\prime}]$ then $f(x)\geq f(x^{\ast})=x^{\ast}\geq\alpha$ since $f$
is increasing. If $x\in(x^{\prime},x^{\prime\prime})$ then $f(x)\geq
f(1)\geq\alpha$. If $\beta\geq x^{\prime\prime}$ and $x^{\prime\prime}\leq
x\leq\beta$ then $f(x)\geq f(\beta)=\alpha$ since $f$ is decreasing.

(iii) $\alpha=f(1)$. In this case, if $x\in\lbrack f(1),x^{\ast})$ then
$f(x)>x>f(1)=\alpha$. If $x^{\ast}<x^{\prime}$ and $x\in\lbrack x^{\ast},1]$
then $f(x)\geq f(x^{\ast})=x^{\ast}\geq\alpha$ for $x\in\lbrack x^{\ast
},x^{\prime})$ and $f(x)\geq f(1)=\alpha$ for $x\in\lbrack x^{\prime},1]$. On
the other hand, if $x^{\ast}\geq x^{\prime}$ then $f(x)\geq f(1)=\alpha$ for
$x\in\lbrack x^{\ast},1]$. Finally, if $\beta>1$ and $x\in(1,\beta]$ then
$f(x)>f(1)=\alpha$ for $x\in(1,x^{\prime\prime})$ since $f$ is increasing on
$(1,x^{\prime\prime})$, and $f(x)\geq f(\beta)\geq\alpha$ for $x\in
(x^{\prime\prime},\beta]$ if $\beta>x^{\prime\prime}$.

The above three cases exhaust all possibilities so $f(x)\in\lbrack\alpha
,\beta]$ if $\gamma>e.$

Next, assume that $\{t_{2n+1}\}$ is bounded and let $\{y_{n}\}$ be as defined
by (\ref{yn}). If
\[
\gamma=\sup\{t_{2n+1}\}+1<\infty
\]
and $y_{n}\in\lbrack\alpha,\beta]$ then%
\[
y_{n+1}=y_{n}e^{a_{2n+1}-y_{n}-t_{2n+1}y_{n}e^{-y_{n}}}>y_{n}e^{\rho
-y_{n}-\gamma y_{n}e^{-y_{n}}}=f(y_{n})\geq\alpha
\]

Similarly, if $\{t_{2n}\}$ is bounded and $\gamma=\sup\{t_{2n}\}+1$ then
\[
z_{n+1}=z_{n}e^{a_{2n+2}-z_{n}-t_{2n+2}z_{n}e^{-z_{n}}}>z_{n}e^{\rho
-z_{n}-\gamma z_{n}e^{-z_{n}}}=f(z_{n})\geq\alpha
\]
which proves (a).

(b) It suffices to consider the case where $z_{n},y_{n}<\alpha$. We will do
this for $z_{n}$, since the case for $y_{n}$ can be done similarly. Let
\[
\tau=\sup\{t_{2n}\}+\frac{1}{2}=\gamma-\frac{1}{2}>0
\]
so for $x<x^{\ast}$ and $n\geq0$
\[
e^{a_{2n}-x-t_{2n}xe^{-x}}>e^{\rho-x-\tau xe^{-x}}>e^{\rho-x^{\ast}-\tau
x^{\ast}e^{-x^{\ast}}}>e^{\rho-x^{\ast}-\gamma x^{\ast}e^{-x^{\ast}}}=1
\]

Define
\[
k=e^{\rho-x^{\ast}-\tau x^{\ast}e^{-x^{\ast}}}>1.
\]

If $z_{n}<\alpha\leq x^{\ast}$, then
\[
z_{n+1}=z_{n}e^{a_{2n+2}-z_{n}-t_{2n+2}z_{n}e^{-z_{n}}}>z_{n}e^{\rho-x^{\ast
}-\tau x^{\ast}e^{-x^{\ast}}}=kz_{n}%
\]

If $z_{n+1}>\alpha$ then we're done; otherwise,
\[
z_{n+2}=z_{n+1}e^{a_{2n+4}-z_{n+1}-t_{2n+4}z_{n+1}e^{-z_{n+1}}}>z_{n+1}%
e^{\rho-x^{\ast}-tx^{\ast}e^{-x^{\ast}}}=kz_{n+1}=k^{2}z_{n}%
\]
and we continue in this way inductively. Since $k>1$ it follows that
$z_{n+N}>z_{n}k^{N}>\alpha$ for sufficiently large $N$.
\end{proof}

\medskip

\begin{lemma}
\label{deriv} Let $\{a_{n}\}$ be periodic of period $p$ and $0<a_{n}<2.$ If as
noted above, $\{y_{n}\}$ and $\{z_{n}\}$ are the even and odd indexed terms of
the solution $\{x_{n}\}$ of (\ref{pce}) with initial values $x_{0},x_{-1}%
\in\lbrack\alpha,\beta]$ then there are constants $K>0$ and $\delta\in(0,1)$
such that
\begin{equation}
\left\vert \prod_{i=0}^{n-1}(1-y_{i})(1-z_{i})\right\vert \leq K\delta^{n}
\label{prod}%
\end{equation}

\end{lemma}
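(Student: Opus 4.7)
The crucial identity that makes the calculation tractable comes from the cofactor equation (\ref{star2}): since $x_{2n+1} = t_{2n+1}x_{2n}e^{-x_{2n}}$ and $x_{2n} = t_{2n}x_{2n-1}e^{-x_{2n-1}}$, one obtains
\[
\mu_n y_n e^{-y_n} = z_{n+1}, \qquad \eta_n z_n e^{-z_n} = y_n.
\]
These collapse (\ref{yn})--(\ref{zn}) into the clean coupled form $y_{n+1} = y_n e^{\rho_n - y_n - z_{n+1}}$ and $z_{n+1} = z_n e^{\zeta_n - z_n - y_n}$, and, more importantly, convert the ``hidden'' $(1-\mu_n y_n e^{-y_n})$ factor in derivatives into a clean $(1-z_{n+1})$.

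My first step is a direct derivative computation. Writing $F_n(y) = y\exp(\rho_n - y - \mu_n y e^{-y})$ from (\ref{yn}) and differentiating logarithmically, one factors
\[
F_n'(y) = \frac{F_n(y)}{y}\,(1-y)\bigl(1-\mu_n y e^{-y}\bigr);
\]
evaluating at $y = y_n$ and using $\mu_n y_n e^{-y_n} = z_{n+1}$ gives
\[
F_n'(y_n) = \frac{y_{n+1}}{y_n}(1-y_n)(1-z_{n+1}).
\]
Telescoping the product over $n=0,\ldots,N-1$ and reindexing the $(1-z_{n+1})$ factor yields
\[
\prod_{n=0}^{N-1} F_n'(y_n) = \frac{y_N(1-z_N)}{y_0(1-z_0)}\prod_{n=0}^{N-1}(1-y_n)(1-z_n),
\]
so the product in the lemma agrees, up to a factor uniformly bounded on $[\alpha,\beta]$ (the degenerate case $z_N = 1$ is handled by a small perturbation of the initial data), with the Lyapunov product $\prod_{n=0}^{N-1}F_n'(y_n)$.

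The second step, which is the main obstacle, is to show that this Lyapunov product decays at a geometric rate in $N$. Since $\{a_n\}$ is periodic with odd period $p$ in the setting of this section, Lemma \ref{ptk} gives that $\{\mu_n\}=\{t_{2n+1}\}$ is itself periodic of period $p$, so the family $\{F_n\}$ is $p$-periodic and the composition $\widetilde F = F_{p-1}\circ\cdots\circ F_0$ is a fixed continuous self-map of the compact invariant interval $[\alpha,\beta]$ furnished by Lemma \ref{invar}. The hypothesis $0<a_n<2$ is precisely the Ricker stability condition: a direct computation at any fixed point $y^\ast\in(0,\rho)$ of an autonomous slice shows that the derivative equals $(1-y^\ast)(1+y^\ast-\rho)$, which lies in $(-1,1)$ whenever $\rho<2$. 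Combining this pointwise inequality with continuity of $\widetilde F'$ and compactness of $[\alpha,\beta]$, one produces a constant $\delta_0<1$ with $|\widetilde F'(y)|\le \delta_0$ for all $y\in[\alpha,\beta]$. Iterating this bound over the $\lfloor N/p\rfloor$ blocks of length $p$ then yields $|\prod F_n'(y_n)|\le C\delta_0^{\lfloor N/p\rfloor}$, which is (\ref{prod}) with $\delta = \delta_0^{1/p}$.

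The hardest part will be the uniform contraction estimate $|\widetilde F'|\le\delta_0<1$: pointwise strict stability at fixed points does not immediately globalize to $[\alpha,\beta]$, so one must rule out points at which $|\widetilde F'|\ge 1$ using only the inequality $a_n<2$ and the coupling identity $\mu_n y_n e^{-y_n}=z_{n+1}$ that ties the $y$- and $z$-chains together.
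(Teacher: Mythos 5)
There is a genuine gap, and it sits exactly where you flag it: the uniform contraction estimate. Your first step (the identities $\mu_n y_n e^{-y_n}=z_{n+1}$, $\eta_n z_n e^{-z_n}=y_n$ and the factorization $F_n'(y_n)=\frac{y_{n+1}}{y_n}(1-y_n)(1-z_{n+1})$) is correct and in fact mirrors how the paper \emph{uses} Lemma \ref{deriv} inside the proof of Theorem \ref{per}; but it only re-expresses the product (\ref{prod}) as a Lyapunov product, it does not bound it. The proposed way to bound it fails: from the fixed-point computation $(1-y^{\ast})(1+y^{\ast}-\rho)\in(-1,1)$ you cannot conclude, via ``continuity plus compactness,'' that $|\widetilde F'(y)|\le\delta_0<1$ on all of $[\alpha,\beta]$. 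Compactness upgrades a strict pointwise inequality to a uniform one only where the pointwise inequality already holds, and here it does not: for $y$ near the left endpoint $\alpha$ (which Lemma \ref{invar} allows to be small, e.g. $\alpha=f(\beta)$), each factor satisfies $F_i'(y)\approx e^{\rho_i}>1$, so the period map $\widetilde F$ has $|\widetilde F'|>1$ on part of $[\alpha,\beta]$. The geometric decay is a statement about products \emph{along orbits}, not a pointwise derivative bound, so the last sentence of your plan is not a technicality to be cleaned up but the entire content of the lemma, and it is left unproved.

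The paper's proof works directly with the orbit product and never needs a pointwise contraction. Writing $u_i$ for either $y_i$ or $z_i$, the only dangerous factors are those with $u_i\ge 2$ (so that $|1-u_i|\ge1$; note $u_i\le\beta=e^{a-1}<e$). For such an $i$ one pairs $|1-u_i|$ with its predecessor $|1-u_{i-1}|$ and uses $u_i< u_{i-1}e^{2-u_{i-1}}$ (this is where $a_n<2$ enters) together with the elementary inequalities $(1-x)(xe^{2-x}-1)<1$ on $(0,1]$ and $\phi(x)=(x-1)(xe^{2-x}-1)\le1$ on $[1,4]$ with the maximum attained only at $x=2$; compactness of $[\alpha,\beta]$ then gives a uniform $\delta<1$ for each \emph{pair} of consecutive factors, and grouping the $n$ factors into $[n/2]$ pairs yields (\ref{prod}). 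If you want to salvage your plan, you would have to replace the pointwise bound on $\widetilde F'$ by precisely this kind of two-step cancellation along orbits.
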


\begin{proof}
Recall that if $g$ is a continuous function on the compact interval
$[\alpha,\beta]$ with $|g(x)|<1$ for all $x\in\lbrack\alpha,\beta]$ then by
the extreme value theorem there is a point $\tilde{x}\in\lbrack\alpha,\beta]$
such that $|g(x)|\leq|g(\tilde{x})|<1$ for $x\in\lbrack\alpha,\beta].$ Thus if
$\delta=|g(\tilde{x})|\in(0,1)$ then $|g(x)|\leq\delta$ for all $x\in
\lbrack\alpha,\beta]$.

Now we establish the inequality in (\ref{prod}). First, if $a=\max_{0\leq
i\leq p-1}\{a_{i}\}<1+\ln2$ then $\beta=e^{a-1}<2.$ Thus if $u_{i}$ denotes
either $y_{i}$ or $z_{i}$ then $u_{i}\in(0,2)\supset\lbrack\alpha,\beta]$,
i.e. $|1-u_{i}|<1$ and there exists $\delta_{1}\in(0,1)$ so that
$|1-u_{i}|<\delta_{1}$ for $u_{i}\in\lbrack\alpha,\beta]$.

Next, suppose that $a\geq1+\ln2$ and let
\[
2\leq u_{i}\leq e.
\]

Consider the preimage $u_{i-1}$ of $u_{i}$. There are two possible cases:
Either $u_{i-1}\leq1$ or $u_{i-1}\geq1$. \newline

\textit{Case 1}: If $u_{i-1}\leq1$ then%
\begin{align}
|1-u_{i-1}||1-u_{i}|  &  =(1-u_{i-1})(u_{i}-1)\nonumber\\
&  \leq(1-u_{i-1})(u_{i-1}e^{a-u_{i-1}-\tau_{i-1}u_{i-1}e^{-u_{i-1}}%
}-1)\nonumber\\
&  <(1-u_{i-1})(u_{i-1}e^{2-u_{i-1}}-1)\nonumber
\end{align}
where $\tau_{i}=\mu_{i}$ or $\eta_{i}$ depending on the case ($y_{n}$ or
$z_{n}$ respectively). Note that
\begin{equation}
(1-x)(xe^{2-x}-1)<1 \label{inq1}%
\end{equation}
for $x\in(0,1]$ because (\ref{inq1}) can be written as $x(1-x)<(2-x)e^{x-2}$
and this inequality is true since its left hand side has a maximum of 1/4 on
(0,1] whereas its right hand side has a minimum of $2e^{-2}>1/4$ on (0,1]. In
particular, (\ref{inq1}) holds for $x\in\lbrack\alpha,1]$ so there exists
$\delta_{2}\in(0,1)$ such that%
\[
|1-u_{i-1}||1-u_{i}|<\delta_{2}.
\]

\textit{Case 2}: If $u_{i-1}\geq1$ then%
\begin{align}
|1-u_{i-1}||1-u_{i}|  &  =(u_{i-1}-1)(u_{i}-1)\nonumber\\
&  \leq(u_{i-1}-1)(u_{i-1}e^{a-u_{i-1}-\tau_{i-1}u_{i-1}e^{-u_{i-1}}%
}-1)\nonumber\\
&  <(u_{i-1}-1)(u_{i-1}e^{a-u_{i-1}}-1).\nonumber
\end{align}

If $\phi(x)=(x-1)(xe^{2-x}-1)$ then%
\[
\phi^{\prime}(x)=[x-(x-1)^{2}]e^{2-x}-1,\quad\phi^{\prime\prime}%
(x)=(x-1)(x-4)e^{2-x}.
\]

Since $\phi$ is smooth with $\phi^{\prime}(2)=0$ and $\phi^{\prime\prime
}(x)<0$ for $x\in(1,4)$ it follows that $\phi$ is maximized on [1,4] at 2 and
$\phi(2)=1.$ In particular, for $u_{i-1}\in\lbrack1,\beta]\subset\lbrack
1,4]$,
\[
(u_{i-1}-1)(u_{i-1}e^{a-u_{i-1}}-1)<\phi(u_{i-1})\leq1
\]
and it follows that there is $\delta_{3}\in(0,1)$ such that
\[
|1-u_{i-1}||1-u_{i}|<\delta_{3}.
\]

Finally, there are at most $m$ pairings $|1-u_{i-1}||1-u_{i}|$ where $m=[n/2]$
(i.e. $m$ is $n/2$ rounded down to the nearest integer). If $n$ is even, then
$m=n/2$, if $n$ is odd, $m=(n-1)/2$ and we have one last unpaired term left,
namely, $|1-u_{0}|<(e-1)$. Choosing $\delta=\max\{\delta_{1},\delta_{2}%
,\delta_{3}\}$, we get%
\[
\left\vert \prod_{i=0}^{n-1}(1-u_{i})\right\vert <(e-1)\delta^{m}.
\]

Therefore,
\[
\left\vert \prod_{i=0}^{n-1}(1-y_{i})(1-z_{i})\right\vert <(e-1)^{2}%
\delta^{2m}\leq\frac{\beta}{\alpha}K\delta^{nm}%
\]
where $K=(e-1)^{2}/\delta$ and the proof is complete.
\end{proof}

\medskip

The next result generalizes similar results in \cite{FHL} and \cite{LS}.

\begin{theorem}
\label{per} Let $\{a_{n}\}$ be a periodic sequence with $0<a_{n}<2$,
$x_{0},x_{-1}>0$ and the sequence $\{t_{n}\}$ with $t_{0}=x_{0}/x_{-1}%
e^{{x_{-1}}}$ be periodic with period $q$. Then each solution of (\ref{pce})
from the initial values $x_{0},x_{-1}$ converges to a periodic solution
(dependent on the choice of initial values) with period $q$.
\end{theorem}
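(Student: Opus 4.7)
The plan is to exploit the triangular structure and treat the cofactor equation $x_{n+1}=t_{n+1}x_n e^{-x_n}$ as a first-order equation driven by the period-$q$ sequence $\{t_n\}$. Setting $g_i(x)=t_i x e^{-x}$, the $q$-fold composition
\[
\Phi := g_q\circ g_{q-1}\circ\cdots\circ g_1
\]
is autonomous and satisfies $x_{n+q}=\Phi(x_n)$. The target is to show that $\Phi$ has a unique attracting fixed point $x^\ast$ on a suitable invariant interval; the orbit of (\ref{pce}) will then converge to the $q$-cycle $\{x^\ast,g_1(x^\ast),\ldots,g_{q-1}\circ\cdots\circ g_1(x^\ast)\}$, whose shape depends on the initial data only through $t_0$.

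Because $\{t_n\}$ is periodic (hence bounded) and $0<a_n<2$, Lemma \ref{invar} applies to both the even- and odd-indexed subsequences, producing an interval $[\alpha,\beta]\subset(0,\infty)$ that eventually traps $\{x_n\}$. After discarding finitely many initial terms I may assume $x_n\in[\alpha,\beta]$ for all $n$, so $\Phi$ maps $[\alpha,\beta]$ into itself and any fixed point obtained below is automatically bounded away from $0$.

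The heart of the argument is a derivative estimate. From $g_i'(x)=t_i e^{-x}(1-x)$ together with the trajectory identity $t_i e^{-x_{i-1}}=x_i/x_{i-1}$, the chain rule gives
\[
(\Phi^k)'(x_0)=\prod_{i=1}^{kq}\frac{x_i}{x_{i-1}}(1-x_{i-1})=\frac{x_{kq}}{x_0}\prod_{i=0}^{kq-1}(1-x_i).
\]
The prefactor is bounded by $\beta/\alpha$, and, after reindexing the remaining product into consecutive pairs of the form $\prod(1-y_i)(1-z_i)$, it is controlled by the exponential decay $K\delta^n$ supplied by Lemma \ref{deriv}. Hence $|(\Phi^k)'|\to 0$ uniformly on $[\alpha,\beta]$, so for large $k$ the map $\Phi^k$ is a strict contraction of $[\alpha,\beta]$. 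A Banach-type argument then makes $\{\Phi^k(x_0)\}$ Cauchy with a limit $x^\ast\in[\alpha,\beta]$ fixed by $\Phi$, and continuity of the $g_i$ transfers the convergence along each residue class modulo $q$, so $\{x_n\}$ approaches the $q$-cycle seeded by $x^\ast$.

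The main technical hurdle is the passage from the raw product $\prod_{i=0}^{kq-1}(1-x_i)$ to the even/odd pairing handled by Lemma \ref{deriv}. This is bookkeeping: one must verify that the invariant interval lies in $(0,e)$ (which holds since $a_n<2$ forces $\beta=e^{a-1}<e$) so that both of the cases $u_{i-1}\leq 1$ and $u_{i-1}\geq 1$ from that lemma's proof apply here, and one must track the parity of $kq$ so that any leftover unpaired factor does not disrupt the geometric decay. Once these items are in place, the contraction estimate, and hence the theorem, follow by the standard Banach argument.
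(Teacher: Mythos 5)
Your proposal matches the paper's proof essentially step for step: the same composed map $\phi=g_{q}\circ\cdots\circ g_{1}$ acting on the cofactor equation, the same invariant absorbing interval $[\alpha,\beta]$ from Lemma \ref{invar}, the same chain-rule identity $(\phi^{n})'(x_{0})=(x_{nq}/x_{0})\prod_{i=1}^{nq}(1-x_{i-1})$, and the same geometric bound on the product via Lemma \ref{deriv}. The only cosmetic difference is that the paper produces the fixed point $x^{\ast}$ by Brouwer's theorem on $[\alpha,\beta]$ and then concludes with the mean value theorem, whereas you derive it from the eventual-contraction property of $\phi^{k}$; both are valid and the argument is correct.
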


\begin{proof}
Let $q$ be the period of the sequence $\{t_{n}\}$ from initial value
$t_{0}=\frac{x_{0}}{x_{-1}e^{{x_{-1}}}}$. For each $i=1,2,\cdots,q$, define
the map
\[
g_{i}(x)=t_{i}xe^{-x}%
\]
and let
\[
\phi=g_{q}\circ g_{q-1}\circ\cdots\circ g_{1}%
\]
Then by the cofactor equation (\ref{star1}),
$\phi$ generates the orbit of $(1)$ from initial values $x_{0},x_{-1}$. Also
note that $\phi$ is an autonomous interval map, and by Lemma \ref{invar},
there exist real numbers $\alpha,\beta>0$ and a positive integer $N$ so that
$\phi:[\alpha,\beta]\rightarrow\lbrack\alpha,\beta]$ and $\phi^{n}%
(x)\in\lbrack\alpha,\beta]$ for all $n\geq N$. Hence, by Brouwer's fixed point
theorem, there exists a $x^{\ast}\in\lbrack\alpha,\beta]$ so that
$\phi(x^{\ast})=x^{\ast}$. Now, let $x_{0}\in\lbrack\alpha,\beta]$ be given.

\medskip Since
\[
g_{i}^{\prime}(x)=t_{i}e^{-x}(1-x)=\frac{g_{i}(x)}{x}(1-x)
\]
then
\begin{align}
\phi^{\prime}(x_{0}) & =\prod_{i=1}^{q}g_{i}^{\prime}(x_{i-1}) =\prod
_{i=1}^{q}\frac{g_{i}(x_{i-1})}{x_{i-1}}(1-x_{i-1})\nonumber\\
& =\frac{g_{1}(x_{0})}{x_{0}}\frac{g_{2}(x_{1})}{x_{1}}\cdots\frac
{g_{q}(x_{q-1})}{x_{q-1}}\prod_{i=1}^{q}(1-x_{i})\nonumber
\end{align}
Noting that $g_{i}(x_{i-1})=x_{i}$, we get
\[
\phi^{\prime}(x_{0})=\frac{x_{q}}{x_{0}}\prod_{i=1}^{q}(1-x_{i-1})
\]
Similarly,
\[
(\phi^{2})^{\prime}(x_{0})=(\phi\circ\phi)^{\prime}(x_{0})=\frac{x_{2q}}%
{x_{0}}\prod_{i=1}^{2q}(1-x_{i-1})
\]
and in general,
\[
(\phi^{n})^{\prime}(x_{0})=\frac{x_{nq}}{x_{0}}\prod_{i=1}^{nq}(1-x_{i-1})
\]

Now, let $m=[nq/2]$. If $nq$ is even, then $m=nq/2$ and by Lemma (\ref{deriv})

\[
\left\vert \prod_{i=1}^{nq}(1-x_{i})\right\vert =\left\vert \prod_{i=1}%
^{m}(1-y_{i})(1-z_{i})\right\vert \leq K\delta^{m}
\]
for some $K>0$, $\delta\in(0,1)$, where $y_{i}$ and $z_{i}$ are the even and
odd indexed terms of the $\{x_{n}\}$ as noted above. If $nq$ is odd, then
$m=(nq-1)/2$, so%

\[
\left\vert \prod_{i=1}^{nq}(1-x_{i})\right\vert =\left\vert (1-x_{nq}%
)\prod_{i=1}^{m}(1-y_{i})(1-z_{i})\right\vert \leq(e-1)K\delta^{m}
\]
hence,%

\[
\left\vert (\phi^{n})^{\prime}(x_{0})\right\vert \leq\frac{\alpha}{\beta
}(e-1)K\delta^{(nq-1)/2}.
\]

Finally,%
\[
|\phi^{n}(x_{0})-x^{\ast}|=|\phi^{n}(x_{0})-\phi^{n}(x^{\ast})|=|\left(
\phi^{n}\right)  ^{\prime}(w)||x_{0}-x^{\ast}|\leq\frac{\beta}{\alpha
}K(e-1)\delta^{(nq-1)/2}|x_{0}-x^{\ast}|\rightarrow0
\]
as $n\rightarrow\infty$ and the proof is complete.
\end{proof}

\medskip

The following result on the existence of periodic solutions is a consequence
of Theorem \ref{per}.

\begin{corollary}
\label{op}Let $\{a_{n}\}$ be periodic with minimal odd period p and further
assume that $0<a_{i}<2$ for $i=0,\cdots,p-1$.

(a) Each solution of (\ref{pce}) converges to a cycle with length $2p$ that
depends on the initial values $x_{-1},x_{0}>0;$

(b) If $x_{0}=x_{-1}e^{-\sigma/2-x_{-1}}$ (i.e. $t_{0}=e^{-\sigma/2}$), then
the solutions of (\ref{pce}) converge to a cycle of length $p$.
\end{corollary}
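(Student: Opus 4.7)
The plan is to obtain both parts of the corollary as direct applications of Lemma \ref{ptk} (which pins down the period of the factor sequence $\{t_n\}$ when $p$ is odd) combined with Theorem \ref{per} (which, once $\{t_n\}$ is periodic of some period $q$, yields convergence of $\{x_n\}$ to a $q$-cycle). The only preparatory observation is that in both parts $\{t_n\}$ is periodic, hence bounded, so the hypothesis $0<a_n<2$ together with Theorem \ref{per} applies without further work.

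For part (a), I would invoke Lemma \ref{ptk}(a): since $p$ is odd, every solution $\{t_n\}$ of the factor equation (\ref{star1}) starting from $t_0=x_0/(x_{-1}e^{-x_{-1}})$ is periodic with period $q=2p$. Theorem \ref{per} then produces a fixed point $x^{\ast}\in[\alpha,\beta]$ of the composition $\phi=g_{2p}\circ\cdots\circ g_1$ (built from $\{t_1,\ldots,t_{2p}\}$) and shows that $\phi^n(x_0)\to x^{\ast}$ exponentially. Because $\phi$ depends on $t_0$ through the values $t_1,\ldots,t_{2p}$, the limiting $2p$-cycle depends on the initial data $x_{-1},x_0$, exactly as claimed.

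For part (b), I would first translate the hypothesis $x_0=x_{-1}e^{-\sigma/2-x_{-1}}$ into a statement about $t_0$: directly from the definition $t_0=x_0/(x_{-1}e^{-x_{-1}})$ this is equivalent to
\[
t_0 = \frac{x_{-1}e^{-\sigma/2-x_{-1}}}{x_{-1}e^{-x_{-1}}} = e^{-\sigma/2}.
\]
Lemma \ref{ptk}(b) then gives that $\{t_n\}$ is periodic with period $q=p$, and applying Theorem \ref{per} with this value of $q$ produces a $p$-cycle limit for $\{x_n\}$.

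There is no genuine obstacle here, since the analytic work (the periodicity analysis of $\{t_n\}$ in the odd-$p$ case, and the contraction estimate for $(\phi^n)'$ through Lemma \ref{deriv}) has already been carried out. The only bookkeeping item to verify in the write-up is the translation between the $t_0$-condition and the $(x_{-1},x_0)$-condition in part (b); everything else is a plug-in of previously proved results.
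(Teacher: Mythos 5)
Your proposal is correct and follows the paper's own argument exactly: both parts are obtained by applying Lemma \ref{ptk} to get the period of $\{t_n\}$ ($2p$ in general, $p$ when $t_0=e^{-\sigma/2}$) and then invoking Theorem \ref{per}. The translation of the initial-value condition into $t_0=e^{-\sigma/2}$ that you verify explicitly is the same bookkeeping the paper relies on implicitly.
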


\begin{proof}
(a) By Lemma \ref{ptk}, $\{t_{n}\}$ is periodic with period $2p$, so
$\{a_{n}\}$ and $\{t_{n}\}$ have a common period $2p$. The rest follows from
Theorem \ref{per}.\newline

(b) If $t_{0}=e^{-\sigma/2}$, then by Lemma \ref{ptk} $t_{n}$ is periodic with
period $p$. Therefore, $\{a_{n}\}$ and $\{t_{n}\}$ have a common period $p$
and the rest follows from Theorem \ref{per}.
\end{proof}

\medskip

\begin{remark}
\label{ms}(Multistability) The periodic solutions in Corollary \ref{op} may be
distinct if the initial values are distinct, since the solutions of equations
(\ref{yn}) and (\ref{zn}) depend on the sequences $\{t_{2n+1}\}$ and
$\{t_{2n}\}$ which in turn depend on $t_{0}=x_{0}/(x_{-1}e^{-x_{-1}}).$ Thus
the cycles in Corollary \ref{op} are not locally stable, hence not ordinary
limit cycles. Since cycles with different values of $t_{0}$ coexist, we see
that (\ref{pce}) exhibits multistability. Figures \ref{Fig1} and \ref{Fig2}
illustrate this situation for period $p=3$ with%
\[
a_{0}=1,\quad a_{1}=1.9,\quad a_{2}=0.8.
\]

\end{remark}

\begin{figure}[tbp] 
  \centering
  \includegraphics[width=5.02in,height=2.41in,keepaspectratio]{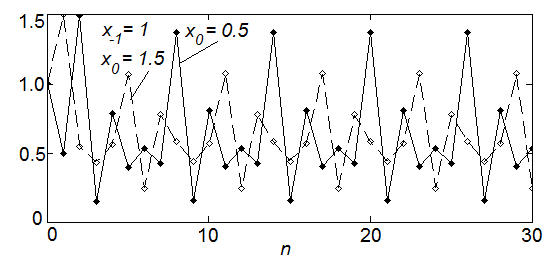}
  \caption{Coexisting period 6 solutions with parameter period p=3}
  \label{Fig1}
\end{figure}

\begin{figure}[tbp] 
  \centering
  \includegraphics[width=5.02in,height=2.49in,keepaspectratio]{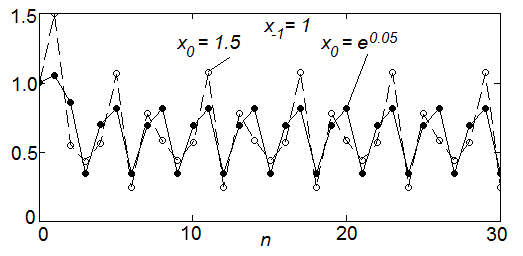}
  \caption{Coexisting period 6 and period 3 (exceptional) solutions, p=3}
  \label{Fig2}
\end{figure}

Next, we consider a wider range of values for $a_{n}$ and the existence of
non-periodic solutions for (\ref{pce}). The next two results are needed.

\begin{lemma}
\label{orb}Let $\{x_{n}\}$ be a solution of (\ref{pce}) with initial values
$x_{-1},x_{0}>0$ and assume that $\{a_{n}\}$ is periodic with minimal period
$p\geq1$ and $\{t_{n}\}$ is periodic with period $q\geq1$. Define%
\[
g_{k}(x)=t_{k}xe^{-x},\ k=0,1,\ldots,q-1
\]
where $t_{0}=x_{0}/(x_{-1}e^{-x_{-1}})$ and $t_{k}$ is given by (\ref{gen1})
and (\ref{gen2}). Also define%
\begin{align*}
h_{k}  &  =g_{k}\circ g_{k-1}\circ\cdots\circ g_{0},\quad k=0,1,\ldots,q-1\\
f  &  =h_{q-1}=g_{q-1}\circ g_{q-2}\circ\cdots\circ g_{1}\circ g_{0}%
\end{align*}

Then $\{x_{n}\}$ is determined by the $q$ sequences%
\begin{equation}
x_{qm+k}=h_{k}\circ f^{\,m}(x_{-1}),\quad k=0,1,\ldots,q-1 \label{oprk2}%
\end{equation}
that are obtained by iterations of one-dimensional maps of the interval
$(0,\infty)$, with $f^{\,0}$ being the identity map.
\end{lemma}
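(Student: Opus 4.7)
The plan is to work directly from the cofactor equation (\ref{star2}), peel off one step at a time, and then exploit the periodicity of $\{t_n\}$ to bundle successive steps into blocks of length $q$. Writing the cofactor equation as $x_{n+1}=g_{n+1}(x_n)$ with $g_k(x)=t_kxe^{-x}$, the recursion becomes the application of a \emph{non-autonomous} one-dimensional map; because $t_{n+q}=t_n$, we have $g_{n+q}=g_n$ as a map on $(0,\infty)$, so only the $q$ maps $g_0,\ldots,g_{q-1}$ actually appear (cyclically).

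First I would check the base step of the indexing, which is the one delicate point: from the definition $t_0=x_0/(x_{-1}e^{-x_{-1}})$ one reads off $x_0=t_0x_{-1}e^{-x_{-1}}=g_0(x_{-1})$. This is what aligns the recursion to start at $n=-1$ rather than at $n=0$, and it is what makes $h_0(x_{-1})=g_0(x_{-1})=x_0$ consistent with (\ref{oprk2}) when $m=0,k=0$. A short induction on $k$ then gives
\[
x_k=g_k\circ g_{k-1}\circ\cdots\circ g_0(x_{-1})=h_k(x_{-1}),\qquad 0\le k\le q-1,
\]
which is the case $m=0$ of the claim. In particular $x_{q-1}=h_{q-1}(x_{-1})=f(x_{-1})$.

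Next I would verify, by induction on $m\ge 0$, that $f^{\,m}(x_{-1})=x_{qm-1}$. The base case $m=0$ is the identity $f^{\,0}(x_{-1})=x_{-1}$. For the step, assuming $f^{\,m}(x_{-1})=x_{qm-1}$, use the periodicity $g_{qm+j}=g_j$ for $j=0,\ldots,q-1$ to iterate the cofactor equation $q$ times starting from index $qm-1$:
\[
x_{qm+q-1}=g_{q-1}\circ g_{q-2}\circ\cdots\circ g_0(x_{qm-1})=f(f^{\,m}(x_{-1}))=f^{\,m+1}(x_{-1}).
\]
With $f^{\,m}(x_{-1})=x_{qm-1}$ in hand, the general formula follows by the same reasoning used for the base case: for any $k\in\{0,1,\ldots,q-1\}$, iterating the cofactor equation $k+1$ times from $x_{qm-1}$ and again invoking $g_{qm+j}=g_j$ yields
\[
x_{qm+k}=g_k\circ g_{k-1}\circ\cdots\circ g_0(x_{qm-1})=h_k\bigl(f^{\,m}(x_{-1})\bigr),
\]
which is (\ref{oprk2}).

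The main thing to get right is not any analytic estimate but bookkeeping: one must track the off-by-one between $x_{-1}$ (the true starting point of the iteration) and the natural indexing of the $g_k$, and one must confirm that the common period of $\{g_n\}$ really is $q$ (i.e., the period of $\{t_n\}$), not $p$ or some multiple. Both points are settled by the computation of $x_0=g_0(x_{-1})$ and by the periodicity of $\{t_n\}$ stated in the hypothesis; beyond that, the argument is a clean double induction on $m$ and $k$.
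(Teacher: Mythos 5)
Your proposal is correct and follows essentially the same route as the paper: identify $x_{0}=g_{0}(x_{-1})$ from the definition of $t_{0}$, use the periodicity $t_{n+q}=t_{n}$ to reduce the non-autonomous cofactor iteration to cyclic composition of $g_{0},\ldots,g_{q-1}$, and induct to get $x_{qm-1}=f^{\,m}(x_{-1})$ and then $x_{qm+k}=h_{k}\circ f^{\,m}(x_{-1})$. The only difference is cosmetic: you separate the induction on $m$ (for the subsequence $x_{qm-1}$) from the pass over $k$, whereas the paper carries the full statement (\ref{oprk2}) through a single induction; both hinge on the same two observations you flag as the delicate points.
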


\begin{proof}
Given the initial values $x_{-1},x_{0}>0$ the definition of $t_{0}$ and
(\ref{star2}) imply that
\begin{align*}
x_{0}  &  =t_{0}x_{-1}e^{-x_{-1}}=g_{0}(x_{-1})=h_{0}(x_{-1})\\
x_{1}  &  =t_{1}x_{0}e^{-x_{0}}=g_{1}(x_{0})=g_{1}\circ g_{0}(x_{-1}%
)=h_{1}(x_{-1})
\end{align*}
and so on:%
\[
x_{k}=h_{k}(x_{-1}),\quad k=0,1,\ldots,q-2
\]

Thus (\ref{oprk2}) holds for $m=0.$ Further, $x_{q-1}=h_{q-1}(x_{-1}%
)=f(x_{-1})$.\ Inductively, we suppose that (\ref{oprk2}) holds for some
$m\geq0$ and note that for $k=0,1,\ldots,q-2$
\[
h_{k+1}=g_{k+1}\circ g_{k}\circ\cdots\circ g_{0}=g_{k+1}\circ h_{k}%
\]

Now by (\ref{star2})
\begin{align*}
x_{q(m+1)-1}  &  =t_{qm+q-1}x_{qm+q-2}e^{-x_{qm+q-2}}\\
&  =t_{q-1}h_{q-2}\circ f^{\,m}(x_{-1})e^{-h_{q-2}\circ f^{m}(x_{-1})}\\
&  =g_{q-1}\circ h_{q-2}\circ f^{\,m}(x_{-1})\\
&  =h_{q-1}\circ f^{\,m}(x_{-1})\\
&  =f^{\,m+1}(x_{-1})
\end{align*}

So (\ref{oprk2}) holds for $k=q-1$ by induction. Further, again by
(\ref{star2}) and the preceding equality
\begin{align*}
x_{q(m+1)}  &  =t_{qm+q}x_{qm+q-1}q^{-x_{qm+q-1}}\\
&  =t_{0}f^{m+1}(x_{-1})e^{-f^{m+1}(x_{-1})}\\
&  =g_{0}\circ f^{\,m+1}(x_{-1})\\
&  =h_{0}\circ f^{\,m+1}(x_{-1})
\end{align*}

Similarly,%
\begin{align*}
x_{q(m+1)+1}  &  =t_{q(m+1)+1}x_{q(m+1)}e^{-x_{q(m+1)}}\\
&  =t_{1}h_{0}\circ f^{m+1}(x_{-1})e^{-h_{0}\circ f^{m+1}(x_{-1})}\\
&  =g_{1}\circ h_{0}\circ f^{m+1}(x_{-1})\\
&  =h_{1}\circ f^{m+1}(x_{-1})
\end{align*}

Repeating this calculation $q-2$ times establishes (\ref{oprk2}) and completes
the induction step and the proof.

\medskip
\end{proof}

\begin{lemma}
\label{pq} Suppose that $\{a_{n}\}$ and $\{t_{n}\}$ are periodic and
$\{t_{n}\}$ has minimal period $q\geq1$.

(a) If the map $f$ in Lemma \ref{orb} has a (positive) periodic point of
minimal period $\omega$ then there is a solution of (\ref{pce}) with period
$\omega q.$

(b) If the map $f$ in Lemma \ref{orb} has a non-periodic point then
(\ref{pce}) has a non-periodic solution.
\end{lemma}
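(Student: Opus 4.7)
The plan is to use the representation from Lemma \ref{orb}, namely $x_{qm+k} = h_k \circ f^{\,m}(x_{-1})$ for $k = 0, 1, \ldots, q-1$, as the bridge between the dynamics of the one-dimensional map $f$ and the full sequence $\{x_n\}$. The additional observation doing the real work, already implicit in the induction inside the proof of Lemma \ref{orb}, is that the subsequence at indices $qm-1$ reproduces exactly the forward $f$-orbit of $x_{-1}$:
\[
x_{qm-1} = f^{\,m}(x_{-1}), \qquad m \geq 0,
\]
which follows by taking $k = q-1$ in the representation and recalling that $h_{q-1} = f$.

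For part (a), assume $x_{-1}$ is a positive periodic point of $f$ of minimal period $\omega$, so $f^{\,\omega}(x_{-1}) = x_{-1}$. For any $n \geq 0$ write $n = qm + k$ with $m \geq 0$ and $0 \leq k \leq q-1$, and then
\[
x_{n + \omega q} = h_k \circ f^{\,m+\omega}(x_{-1}) = h_k \circ f^{\,m}(x_{-1}) = x_n,
\]
while the case $n = -1$ is immediate from the subsequence identity displayed above. This establishes that the corresponding solution of (\ref{pce}) is periodic with period $\omega q$.

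For part (b), I would argue by contradiction: suppose $x_{-1}$ is chosen to be a non-periodic point of $f$ but the associated solution $\{x_n\}$ happens to be periodic with some period $T \geq 1$. Iterating the periodicity relation $x_{n+T} = x_n$ exactly $q$ times yields $x_{n + qT} = x_n$ for all admissible $n$, and in particular $x_{qT-1} = x_{-1}$. On the other hand, the subsequence identity gives $x_{qT-1} = f^{\,T}(x_{-1})$, so $f^{\,T}(x_{-1}) = x_{-1}$, contradicting the assumption that $x_{-1}$ is not periodic under $f$.

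I do not anticipate a serious obstacle. The only content beyond direct bookkeeping is the identification of $\{x_{qm-1}\}$ with the forward orbit of $x_{-1}$ under $f$, after which both parts reduce to one-line applications of the representation formula.
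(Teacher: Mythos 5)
Your proposal is correct and follows essentially the same route as the paper: both rest on the identity $x_{qm+k}=h_{k}\circ f^{\,m}(x_{-1})$ from Lemma \ref{orb}, with part (a) reading off periodicity from $f^{\,\omega}(x_{-1})=x_{-1}$ and part (b) using the subsequence $x_{qm-1}=f^{\,m}(x_{-1})$ (the paper states the contrapositive directly where you phrase it as a contradiction, but the content is identical).
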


\begin{proof}
(a) By hypothesis, there is a number $s\in(0,\infty)$ such that $f^{n+\omega
}(s)=f^{n}(s)$ for all $n\geq0.$ Let $x_{-1}=s$ and define $x_{0}=h_{0}(s)$.
By Lemma \ref{orb} the solution $x_{n}$ corresponding to these initial values
follows the track shown below:%
\[%
\begin{array}
[c]{rrrr}%
x_{-1}=s\rightarrow & x_{0}=h_{0}(s)\rightarrow & \cdots\rightarrow &
x_{q-2}=h_{q-2}(s)\rightarrow\\
\rightarrow x_{q-1}=h_{q-1}(s)=f(s)\rightarrow & x_{q}=h_{0}(f(s))\rightarrow
& \cdots\rightarrow & x_{2q-2}=h_{q-2}(f(s))\rightarrow\\
\rightarrow x_{2q-1}=h_{q-1}(f(s))=f^{2}(s)\rightarrow & x_{2q}=h_{0}%
(f^{2}(s))\rightarrow & \cdots\rightarrow & x_{3q-2}=h_{3q-2}(f^{2}%
(s))\rightarrow\\
\multicolumn{1}{c}{\vdots} & \multicolumn{1}{c}{\vdots} &
\multicolumn{1}{c}{\vdots} & \multicolumn{1}{c}{\vdots}\\
x_{\omega q-1}=h_{q-1}(f^{\omega-1}(s))=f^{\omega}(s)=s\rightarrow &
x_{q\omega}=h_{0}(s)\rightarrow & \cdots\rightarrow & x_{(\omega
+1)q-2}=h_{q-2}(s)\rightarrow\cdots
\end{array}
\]

The pattern in this list evidently repeats after $\omega q$ entries. So
$x_{\omega q+n}=x_{n}$ for $n\geq0$ and it follows that the solution
$\{x_{n}\}$ of (\ref{pce}) has period $\omega q$.

(b) Suppose that $\left\{  f^{n}(x_{-1})\right\}  $ is a non-periodic sequence
for some $x_{-1}>0.$ Then by Lemma \ref{orb} the solution $\{x_{n}\}$ of
(\ref{pce}) with initial values $x_{-1}$ and $x_{0}=g_{0}(x_{-1})$ has the
non-periodic subsequence
\[
x_{qn-1}=f^{n}(x_{-1})
\]

It follows that $\{x_{n}\}$ is non-periodic.
\end{proof}

\medskip

The following involves a wider range of values for $a_{n}$ than Corollary
\ref{op} which in particular, allows for the existence of non-periodic solutions.

\begin{theorem}
\label{oddgen} Suppose that $\{a_{n}\}$ is periodic with minimal odd period
$p\geq1$ and let $f$ be the interval map in Lemma \ref{orb} where $t_{0}>0$ is
a fixed real number and $t_{k}$ is given by (\ref{gen1})-(\ref{gen2}) for
$k\geq1$.

(a) If $s$ is a periodic point of $f$ with period $\omega$ then all solutions
of (\ref{pce}) with initial values $x_{-1}=s$ and $x_{0}=t_{0}se^{-s}$ (i.e.
$(x_{-1},x_{0})$ is on the curve $g_{0}$) have period $2p\omega.$

(b) If $t_{0}=e^{-\sigma/2}$ and $s$ is a periodic point of $f$ with period
$\omega$, then all solutions of (\ref{pce}) with initial values $x_{-1}=s$ and
$x_{0}=se^{-\sigma/2-s}$ have period $p\omega$ (in this case the graph of
$g_{0}$ is an invariant set of (\ref{pce}) in the state-plane).

(c) If the map $f$ has a non-periodic point, then (\ref{pce}) has a
non-periodic solution.

(d) If $f$ has a period-three point then (\ref{pce}) has periodic solutions of
period $2pn$ for all positive integers $n$ as well as chaotic solutions in the
sense of Li-Yorke (\cite{El},\cite{LY}).
\end{theorem}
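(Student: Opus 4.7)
The plan is to reduce each of the four parts to results already established, with part (d) the only one requiring genuinely new input, namely the classical Sharkovsky and Li--Yorke theorems on continuous interval maps.

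For part (a), since $p$ is odd, Lemma \ref{ptk}(a) gives that the sequence $\{t_n\}$ produced from any $t_0>0$ has period $2p$. With $x_{-1}=s$ and $x_0=t_0 s e^{-s}=g_0(s)=h_0(s)$, the initial condition matches precisely the setup of Lemma \ref{orb}, so Lemma \ref{pq}(a) applied with $q=2p$ yields a solution of (\ref{pce}) of period $\omega q=2p\omega$. Part (b) is strictly analogous: the choice $t_0=e^{-\sigma/2}$ together with Lemma \ref{ptk}(b) forces $\{t_n\}$ to have period $q=p$, so the same invocation of Lemma \ref{pq}(a) produces a solution of period $p\omega$; the invariance of the graph of $g_0$ then follows from $t_{kp}=t_0$ for every $k\geq 0$, which means that once $x_0=g_0(x_{-1})$ the pair $(x_{kp-1},x_{kp})$ again lies on the graph of $g_0$ for every $k$. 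Part (c) is an immediate application of Lemma \ref{pq}(b): any non-periodic orbit of $f$ transfers through the subsequence identity $x_{qn-1}=f^n(x_{-1})$ of Lemma \ref{orb} into a non-periodic solution of (\ref{pce}).

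For part (d), the map $f=g_{q-1}\circ\cdots\circ g_0$ is a continuous self-map of the interval $(0,\infty)$, being a composition of continuous Ricker-type maps $g_k$. Sharkovsky's theorem therefore applies: a period-three point of $f$ forces $f$ to possess periodic points of every minimal period $n\geq 1$, and by part (a) each such point generates a periodic solution of (\ref{pce}) of period $2pn$. Simultaneously the Li--Yorke theorem furnishes an uncountable scrambled set of non-periodic points for $f$; by part (c) each such point seeds a non-periodic solution of (\ref{pce}), and the Li--Yorke $\limsup/\liminf$ scrambling lifts to (\ref{pce}) because the subsequence $\{x_{qn-1}\}_n$ of any such solution coincides, up to the fixed shift $q=2p$, with an orbit of $f$. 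The main subtlety worth flagging is merely to check that Sharkovsky and Li--Yorke apply in the present non-compact setting of $(0,\infty)$ (they do, since both theorems hold for continuous self-maps of any interval); once this is observed the conclusion is essentially mechanical.
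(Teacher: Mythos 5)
Your proof follows essentially the same route as the paper: parts (a)--(c) are reduced to Lemma \ref{pq} combined with Lemma \ref{ptk} (with $q=2p$ or $q=p$ as appropriate), and part (d) invokes the period-three-implies-all-periods and Li--Yorke chaos conclusions before feeding the resulting periodic and non-periodic points of $f$ back into parts (a) and (c). The only differences are cosmetic: you supply slightly more detail than the paper's two-line proof, e.g.\ the invariance of the graph of $g_{0}$ in (b) and the remark that Sharkovsky/Li--Yorke apply on the non-compact interval $(0,\infty)$.
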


\begin{proof}
Parts (a) and (c) are immediate consequences of Lemma \ref{pq} with $q=2p$
because of Lemma \ref{ptk}(a). Part (b) is true by Lemma \ref{ptk}(b).

(d) As is well-known from \cite{LY}, if $f$ has a period three point then $f$
has periodic points of every period $n\geq1$, as well as aperiodic, chaotic
solutions in the sense of Li and Yorke. Therefore, by parts (a) and (b),
(\ref{pce}) also has periodic solutions of period $2pn$, as well as chaotic solutions.
\end{proof}

\medskip

In the case $p=1$, i.e. when (\ref{pce}) is autonomous with $a_{n}=a$ for all
$n,$ the conditions stated in Theorem \ref{oddgen} were examined in \cite{LS}.
In particular, it was verified that if $a\geq3.13$ then (\ref{pce}) has
chaotic solutions from certain initial conditions. The multistable nature of
solutions of (\ref{pce}) was also discussed in detail.

For odd $p\geq3$ the map $f$ is a composition of $2p$ functions and therefore,
analytically less tractable. We use numerical simulations to highlight the
rich variety of coexisting solutions that Theorem \ref{oddgen} allows. As
noted above, and explained in greater detail in \cite{LS}, these solutions are
attracting (though not locally stable) so they are observable and may be
recorded numerically.

In the next four figures, $p=3$ with%
\[
a_{0}=1,\quad a_{1}=2,\quad a_{2}=4.
\]

\begin{figure}[tbp] 
  \centering
  \includegraphics[width=5.67in,height=3.11in,keepaspectratio]{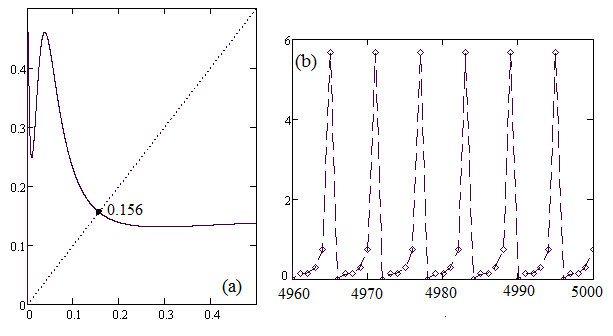}
  \caption{(a) Graph of $f$; (b) Corresponding 6-cycle}
  \label{Fig3}
\end{figure}

The initial values that generate the 6-cycle in Figure \ref{Fig3} are
$x_{-1}=1$ and $x_{0}=0.8.$ Panel (a) shows the map $f=g_{5}\circ\cdots\circ
g_{0}$ which is a composition of six exponential maps, together with a single
stable positive fixed point that corresponds to the solution of (\ref{pce})
shown in Panel (b).

\begin{figure}[tbp] 
  \centering
  \includegraphics[width=5.67in,height=2.97in,keepaspectratio]{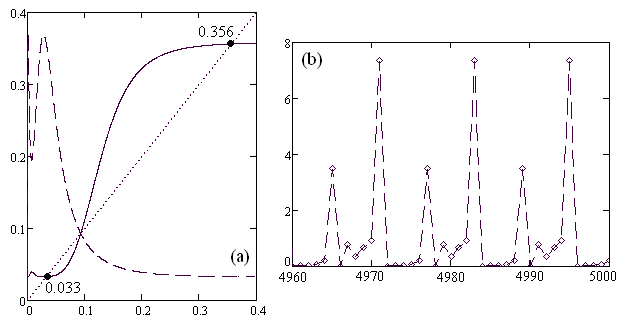}
  \caption{(a) Graphs of $f$ and $f^{2}$; (b) Corresponding 12-cycle}
  \label{Fig4}
\end{figure}

In Figure \ref{Fig4} the initial values are $x_{-1}=x_{0}=1.$ In Panel (a) the
graphs of $f$ and $f^{2}$ are shown that indicate the presence of a stable
2-cycle (the fixed point of $f$ is unstable in this case). Panel (b) shows the
corresponding 12-cycle for (\ref{pce}), as required by Theorem \ref{oddgen}
with $\omega=2$. We emphasize that this solution coexists stably with the
6-cycle in \ref{Fig3}. We further note 12-cycles do not exist under the
hypotheses of Corollary \ref{op} since those hypotheses imply the stability of
the fixed point of $f$.

\begin{figure}[tbp] 
  \centering
  \includegraphics[width=5.67in,height=3.82in,keepaspectratio]{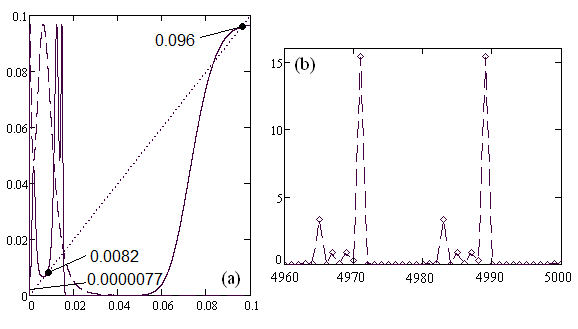}
  \caption{(a) Graphs of $f$ and $f^{3}$; (b) Corresponding 18-cycle}
  \label{Fig5}
\end{figure}

In Figure \ref{Fig5} the initial values are $x_{-1}=1$ and $x_{0}=3.8.$ In
Panel (a) the graphs of $f$ and $f^{3}$ are shown where a stable 3-cycle is
indicated that corresponds to the solution of (\ref{pce}) that is shown in
Panel (b). As stated in Theorem \ref{oddgen} this is an 18-cycle since now
$\omega=3$. This solution coexists stably with the 6-cycle and the 12-cycle
above. Also note that 18-cycles do not exist under the hypotheses of Corollary
\ref{op}.

\begin{figure}[tbp] 
  \centering
  \includegraphics[width=5.67in,height=2.73in,keepaspectratio]{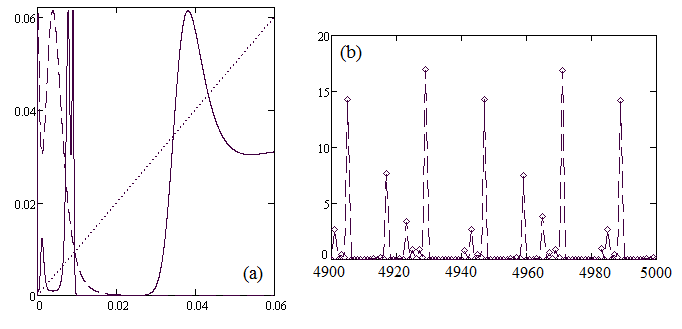}
  \caption{(a) Graphs of $f$ and $f^{3}$; (b) Corresponding nonperiodic solution}
  \label{Fig6}
\end{figure}

Finally, in Figure \ref{Fig6} the initial values are $x_{-1}=1$ and $x_{0}=6.$
Panel (a) shows the graphs of $f$ and $f^{3}$ where we can identify a pair of
unstable 3-cycles where the graph of $f^{3}$ crosses the identity line (in
addition to the unstable fixed point of $f$). The map $f$ then exhibits
Li-Yorke type chaos. A portion of the plot of the corresponding solution of
(\ref{pce}) is shown in Panel (b). This nonperiodic solution coexists stably
with the periodic solutions mentioned above. However, nonperiodic solutions do
not exist under the hypotheses of Corollary \ref{op}.

\section{The even period case}

When $\{a_{n}\}$ is periodic with minimal even period $p$ the next result
shows that the sequence $\{t_{n}\}$ is \textit{not} periodic with the
exception of a boundary case. This causes a fundamental change in the dynamics
of (\ref{pce}). Once again, the quantity $\sigma$ is defined by (\ref{sig}),
i.e.%
\[
\sigma=-a_{0}+a_{1}-a_{2}+\ldots+a_{p-1}.
\]

\begin{lemma}
\label{pet}Suppose that $\{a_{n}\}$ is a sequence of real numbers with minimal
even period $p\geq2$ and let $\{t_{n}\}$ be a solution of (\ref{sc1}). Then%
\begin{equation}
t_{n}=\left(  t_{0}e^{d_{n}\sigma+\gamma_{n}}\right)  ^{(-1)^{n}} \label{pe}%
\end{equation}
where the integer divisor $d_{n}=[n-n(\operatorname{mod}p)]/p$ is uniquely
defined by each $n$ and
\begin{equation}
\gamma_{n}=\left\{
\begin{array}
[c]{ll}%
\sum_{j=1}^{n(\operatorname{mod}p)}(-1)^{j}a_{j-1} & \text{if }%
n(\operatorname{mod}p)\not =0\\
0 & \text{if }n(\operatorname{mod}p)=0
\end{array}
\right.  \label{gamn}%
\end{equation}

The sequence $\{t_{n}\}$ is periodic with period $p$ iff $\sigma=0$, i.e.%
\begin{equation}
a_{0}+a_{2}+\cdots a_{p-2}=a_{1}+a_{3}+\cdots+a_{p-1}. \label{peq}%
\end{equation}

\end{lemma}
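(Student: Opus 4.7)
The plan is to derive the closed form \eqref{pe} directly from the general solution $t_n = t_0^{(-1)^n} e^{(-1)^n s_n}$ of Lemma \ref{bsc}(a) by evaluating $s_n$ modulo the period. Writing $n = d_n p + r$ with $r = n(\operatorname{mod} p) \in \{0, 1, \ldots, p-1\}$, I would split the sum in \eqref{gen2} into $d_n$ complete blocks of length $p$ followed by $r$ leftover terms.

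The key computational step is that, because $p$ is even, each complete block contributes exactly $\sigma$: under the shift $j = kp + i$ the sign $(-1)^j$ equals $(-1)^i$ (since $(-1)^{kp}=1$), and the periodicity $a_{kp+i-1} = a_{i-1}$ reproduces the original summand. Summing over $k = 0, 1, \ldots, d_n - 1$ yields $s_{d_n p} = d_n \sigma$, and the same shift argument identifies the residual sum of $r$ terms with $\gamma_n$ as defined in \eqref{gamn}. Substituting $s_n = d_n \sigma + \gamma_n$ into the formula of Lemma \ref{bsc}(a) then gives \eqref{pe}.

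For the periodicity characterization, I would use \eqref{pe} to compute $t_{n+p}$. Evenness of $p$ gives $(-1)^{n+p} = (-1)^n$, while $d_{n+p} = d_n + 1$ and $\gamma_{n+p} = \gamma_n$ because the residue mod $p$ is unchanged. Hence
\[
t_{n+p} = \left( t_0 e^{(d_n + 1)\sigma + \gamma_n}\right)^{(-1)^n} = t_n \cdot e^{(-1)^n \sigma},
\]
so $\{t_n\}$ has period $p$ if and only if $e^{(-1)^n \sigma} = 1$ for every $n$, i.e.\ $\sigma = 0$. Rewriting this equality as a balance between the even-indexed and odd-indexed entries of a full cycle $a_0, \ldots, a_{p-1}$ produces \eqref{peq}.

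There is no substantive obstacle; the proof is essentially a bookkeeping exercise. The one point requiring care is ensuring that the evenness of $p$ is invoked in both places it matters: once so that the alternating sign pattern $(-1)^j$ is preserved under the shift $j \mapsto j + p$ (so that each block contributes $+\sigma$ rather than $-\sigma$), and once so that the overall sign $(-1)^n$ does not flip when testing $p$-periodicity. If $p$ were odd, both features would fail, which is consistent with Lemma \ref{ptk} where the true period doubles to $2p$.
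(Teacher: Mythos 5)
Your proof is correct and follows essentially the same route as the paper: both decompose $s_n$ into $d_n$ full period-blocks, each contributing $+\sigma$ with uniform sign because $p$ is even, plus the residual sum $\gamma_n$, and then substitute into the formula of Lemma \ref{bsc}(a). The only cosmetic difference is in the ``only if'' direction of the periodicity claim, where the paper observes that $d_n\sigma$ is unbounded when $\sigma\neq 0$ while you compute the ratio $t_{n+p}/t_n=e^{(-1)^n\sigma}$ directly; both arguments are immediate and equivalent for the stated conclusion.
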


\begin{proof}
Let $\{a_{0},a_{1},\ldots,a_{p-1}\}$ be a full cycle of $a_{n}$ with an even
number of terms. Since $n=pd_{n}+n(\operatorname{mod}p)$ for $n\geq1$, expand
$s_{n}$ in (\ref{gen2}) to obtain%
\[
s_{n}=d_{n}\sigma+\sum_{j=1}^{n(\operatorname{mod}p)}(-1)^{j}a_{j-1}%
\]
if $n(\operatorname{mod}p)\not =0.$ If $p$ divides $n$ so that
$n(\operatorname{mod}p)=0$ then we assume that the sum is 0 and $s_{n}%
=d_{n}\sigma.$ Thus $s_{n}=d_{n}\sigma+\gamma_{n}$ where $\gamma_{n}$ is as
defined in (\ref{gamn}).

The $\sigma$ terms have uniform signs in this case since there are an even
number of terms in each full cycle of $a_{n}$. Now (\ref{gen1}) yields%
\[
t_{n}=t_{0}^{(-1)^{n}}e^{(-1)^{n}s_{n}}=t_{0}^{(-1)^{n}}e^{(-1)^{n}%
(d_{n}\sigma+\gamma_{n})}%
\]
which is the same as (\ref{pe}).

Next, if $\sigma\not =0$ then $d_{n}\sigma$ is unbounded as $n$ increases
without bound so $\{t_{n}\}$ is not periodic. But if $\sigma=0$ then
(\ref{pe}) reduces to%
\begin{equation}
t_{n}=\left(  t_{0}e^{\gamma_{n}}\right)  ^{(-1)^{n}} \label{pe0}%
\end{equation}

Since the sequence $\gamma_{n}$ has period $p,$ the expression on the right
hand side of (\ref{pe0}) has period $p$ with a full cycle%
\[
t_{1}=\frac{e^{a_{0}}}{t_{0}},t_{2}=t_{0}e^{-a_{0}+a_{1}},t_{3}=\frac
{e^{a_{0}-a_{1}+a_{2}}}{t_{0}},\ldots,t_{p}=t_{0}e^{-a_{0}+a_{1}%
+\cdots+(-1)^{p}a_{p-1}}=t_{0}.
\]

\end{proof}

\medskip

By the preceding result,
\begin{align*}
t_{2m}  &  =t_{0}e^{\gamma_{2m}}e^{d_{2m}\sigma}\qquad\quad\text{if
}n=2m\text{ is even}\\
t_{2m+1}  &  =\frac{1}{t_{0}}e^{-\gamma_{2m+1}}e^{-d_{2m+1}\sigma}%
\quad\text{if }n=2m+1\text{ is odd}%
\end{align*}

Suppose that $\sigma\not =0.$ If $\sigma>0$ then since $\lim_{n\rightarrow
\infty}d_{n}=\infty$ it follows that $t_{2m}$ is unbounded but $t_{2m+1}$
converges to 0, and the reverse is true if $\sigma<0.$ Therefore,
\begin{align}
\lim_{m\rightarrow\infty}t_{2m}  &  =\infty,\quad\lim_{m\rightarrow\infty
}t_{2m+1}=0,\quad\text{if }\sigma>0,\label{tn1}\\
\lim_{m\rightarrow\infty}t_{2m}  &  =0,\quad\lim_{m\rightarrow\infty}%
t_{2m+1}=\infty,\quad\text{if }\sigma<0. \label{tn2}%
\end{align}

\begin{lemma}
\label{1h}Suppose that $\{a_{n}\}$ is a sequence of real numbers with minimal
even period $p\geq2$ and let $\{x_{n}\}$ be a solution of (\ref{pce}) with
initial values $x_{-1},x_{0}>0.$ Then $\lim_{n\rightarrow\infty}x_{2n+1}=0$ if
$\sigma>0$ and $\lim_{n\rightarrow\infty}x_{2n}=0$ if $\sigma<0$.
\end{lemma}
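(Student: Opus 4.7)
The plan is to combine the pointwise bound from Lemma \ref{bsc}(b) with the explicit subsequential limits for $\{t_n\}$ that were already displayed immediately after Lemma \ref{pet}. The proof is essentially a direct reading off from those two ingredients, so there is no significant obstacle.

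First I would recall the setup: given positive initial values $x_{-1},x_0$, the factor sequence $t_n = x_n/(x_{n-1}e^{-x_{n-1}})$ satisfies $t_0 > 0$, hence $t_n > 0$ for all $n$ by (\ref{star1}). From the cofactor equation (\ref{star2}) together with the elementary inequality $u e^{-u} \leq 1/e$ for $u > 0$, Lemma \ref{bsc}(b) yields
\[
0 < x_{n+1} = t_{n+1} x_n e^{-x_n} \leq \frac{1}{e} t_{n+1}
\]
for every $n \geq 0$. So a small $t_n$ forces a small $x_n$.

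Next I would invoke the two displayed limits (\ref{tn1}) and (\ref{tn2}) that follow from Lemma \ref{pet}. Under the even-period hypothesis with $\sigma \neq 0$, the sequence $\gamma_n$ is bounded (it is periodic of period $p$), while the integer divisor $d_n = [n - n(\operatorname{mod} p)]/p$ tends to $\infty$. Reading the explicit expressions
\[
t_{2m} = t_0 e^{\gamma_{2m}} e^{d_{2m}\sigma}, \qquad t_{2m+1} = \frac{1}{t_0} e^{-\gamma_{2m+1}} e^{-d_{2m+1}\sigma},
\]
we see that if $\sigma > 0$ then $t_{2m+1} \to 0$ (and $t_{2m} \to \infty$), while if $\sigma < 0$ then $t_{2m} \to 0$ (and $t_{2m+1} \to \infty$).

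Finally I would combine the two: in the case $\sigma > 0$, applying the bound $0 < x_{2m+1} \leq t_{2m+1}/e$ and letting $m \to \infty$ gives $\lim_{m\to\infty} x_{2m+1} = 0$ by the squeeze theorem; symmetrically, in the case $\sigma < 0$, the bound $0 < x_{2m} \leq t_{2m}/e$ yields $\lim_{m\to\infty} x_{2m} = 0$. This completes the argument. The only point worth noting is that the bound $x_n \leq t_n/e$ is valid for $n \geq 1$, so the argument needs to be applied to the tail of each subsequence, which is harmless since we are taking a limit as $m \to \infty$.
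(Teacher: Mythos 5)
Your proposal is correct and follows essentially the same route as the paper: the paper invokes the limits (\ref{tn1})--(\ref{tn2}) and the argument of Lemma \ref{z} (i.e.\ $x_{n+1}\leq t_{n+1}/e\leq e^{a-1}/t_n$ with $t_{2n}\to\infty$), whereas you simply read off $t_{2n+1}\to 0$ directly and apply Lemma \ref{bsc}(b) — the same two ingredients, combined in a marginally more direct way.
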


\begin{proof}
Assume first that $\sigma>0$. Then by (\ref{tn1}) $\lim_{n\rightarrow\infty
}t_{2n}=\infty$ so as in the proof of Lemma \ref{z} $\lim_{n\rightarrow\infty
}x_{2n+1}=0.$ If $\sigma<0$ then a similar argument using (\ref{tn2}) yields
$\lim_{n\rightarrow\infty}x_{2n}=0$ to complete the proof.
\end{proof}

\medskip

Lemma \ref{1h} indicates that half of the terms of every solution $\{x_{n}\}$
of (\ref{pce}) converge to 0 in the even period case if $\sigma\not =0.$ We
now consider what happens to the \textit{other} half.

\begin{lemma}
\label{sol} Let $\{u_{n}\}$ be the solution of
\begin{equation}
u_{n+1}=u_{n}e^{a_{2n+1}-u_{n}} \label{fo}%
\end{equation}
and $\{w_{n}\}$ be the solution of
\begin{equation}
w_{n+1}=w_{n}e^{a_{2n+2}-w_{n}}. \label{fo1}%
\end{equation}

(a) The sequence $\{x_{n}\}$ with $x_{2n}=u_{n}$ and $x_{2n+1}=0$ is a
solution of (\ref{pce}).

(b) The sequence $\{x_{n}\}$ with $x_{2n}=0$ and $x_{2n+1}=w_{n}$ is a
solution of (\ref{pce}).
\end{lemma}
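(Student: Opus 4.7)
The plan is to prove both parts by direct substitution into the recursion \eqref{pce}, namely $x_{n+1}=x_{n-1}e^{a_{n}-x_{n-1}-x_{n}}$, and then split the verification into two subcases according to the parity of the index $n$. Since the proposed sequences have a zero in every other slot, one parity should reduce to the proposed first-order recursion \eqref{fo} or \eqref{fo1}, while the other parity should hold vacuously because a zero factor appears on both sides.

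For part (a), I would set $x_{2n}=u_{n}$ and $x_{2n+1}=0$. When $n$ in \eqref{pce} is odd, write $n=2k+1$; then the recursion to check becomes $x_{2k+2}=x_{2k}e^{a_{2k+1}-x_{2k}-x_{2k+1}}$, i.e. $u_{k+1}=u_{k}e^{a_{2k+1}-u_{k}-0}$, which is exactly \eqref{fo}. When $n=2k$ is even, \eqref{pce} demands $x_{2k+1}=x_{2k-1}e^{a_{2k}-x_{2k-1}-x_{2k}}$, i.e. $0=0\cdot e^{a_{2k}-0-u_{k}}$, which holds trivially. Part (b) is entirely analogous: with $x_{2n}=0$ and $x_{2n+1}=w_{n}$, the case $n=2k$ in \eqref{pce} yields $w_{k}=w_{k-1}e^{a_{2k}-w_{k-1}}$, matching \eqref{fo1} (after reindexing $n\mapsto k-1$), while the case $n=2k+1$ reduces to $0=0$.

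There is really no obstacle here beyond careful bookkeeping of indices. The only mildly delicate point is ensuring the indexing of $a_{n}$ lines up correctly with the shifted indices in \eqref{fo} and \eqref{fo1}, especially in part (b) where a shift $n\mapsto k-1$ is required. For this reason I would write the two substitutions out explicitly, rather than appealing to any symmetry between (a) and (b). Because the argument is pure verification, no use of earlier lemmas (such as Lemmas \ref{pet} or \ref{1h}) is needed in the proof itself; these lemmas merely motivate why such boundary solutions are of interest.
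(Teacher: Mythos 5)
Your proof is correct and is essentially the paper's own argument: the paper also verifies the claim by direct substitution, checking that the odd-index step of (\ref{pce}) reproduces (\ref{fo}) (resp. (\ref{fo1})) while the other parity reduces to $0=0$, merely phrasing the bookkeeping as an induction on $k$. Your index shifts, including the $n\mapsto k-1$ reindexing in part (b), line up correctly with (\ref{fo}) and (\ref{fo1}).
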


\begin{proof}
(a) Let $\{u_{n}\}$ be a solution to (\ref{fo}) from initial value $u_{0}>0$.
If $x_{0}=u_{0}$ and $x_{1}=0$, then
\[
x_{2}=x_{0}e^{a_{1}-x_{0}-x_{1}}=u_{0}e^{a_{1}-u_{0}}=u_{1}%
\]
and
\[
x_{3}=x_{1}e^{a_{2}-x_{2}-x_{1}}=0
\]
Inductively, if $x_{2k}=u_{k}$ and $x_{2k+1}=0$ for some $k\geq1$ then
\[
x_{2k+2}=x_{2k}e^{a_{2k+1}-x_{2k}-x_{2k+1}}=u_{k}e^{a_{2k+1}-u_{k}}=u_{k+1}%
\]
and
\[
x_{2k+3}=x_{2k+1}e^{a_{2k+2}-x_{2k+1}-a_{2k+2}}=0
\]
which proves (a).

\medskip(b) Let $\{w_{n}\}$ be a solution to (\ref{fo1}) from initial value
$w_{0}>0$. If $x_{0}=0$ and $x_{1}=w_{0}$, then
\[
x_{2}=x_{0}e^{a_{1}-x_{0}-x_{1}}=0
\]
and
\[
x_{3}=x_{1}e^{a_{2}-x_{2}-x_{1}}=w_{0}e^{a_{2}-w_{0}}=w_{1}
\]
Inductively, if $x_{2k}=0$ and $x_{2k+1}=w_{k}$ for some $k\geq1$ then
\[
x_{2k+2}=x_{2k}e^{a_{2k+1}-x_{2k}-x_{2k+1}}=0
\]
and
\[
x_{2k+3}=x_{2k+1}e^{a_{2k+2}-x_{2k+1}-a_{2k+2}}=w_{k}e^{a_{2k+2}-w_{k}%
}=w_{k+1}
\]
which proves (b).
\end{proof}

\medskip

The next result is proved in \cite{sacker}.\ 

\begin{lemma}
\label{skr}Consider the first-order difference equation
\begin{equation}
y_{n+1}=y_{n}e^{\alpha_{n}-y_{n}} \label{sack}%
\end{equation}
where $\alpha_{n}$ is a sequence of real numbers with period $q$. If
$0<\alpha_{n}<2$ then (\ref{sack}) has a globally asymptotically stable
solution $\{y_{n}^{\ast}\}$ with period $q$ such that%
\[
\sum_{i=1}^{q}y_{i}^{\ast}=\sum_{i=1}^{q}\alpha_{i}.
\]

\end{lemma}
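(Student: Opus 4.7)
The plan is to reduce (\ref{sack}) to the autonomous composition $F = f_{q-1} \circ \cdots \circ f_0$, where $f_k(y) = y e^{\alpha_k - y}$. Periodic $q$-cycles of (\ref{sack}) correspond exactly to positive fixed points of $F$, so the result reduces to global attractivity of such a fixed point together with a contraction estimate along the cycle. First I would exhibit an absorbing invariant interval $[\alpha, \beta]$ for $F$: each factor $f_k$ attains its maximum $e^{\alpha_k - 1}$ at $y = 1$, so with $\beta = \max_k e^{\alpha_k - 1} < e$ (using $\alpha_k < 2$) we have $F((0, \infty)) \subset (0, \beta]$, while a positive lower bound $\alpha$ after finitely many iterates follows from the Ricker-type argument used in Lemma \ref{invar}: $\inf_k \alpha_k > 0$ yields a positive fixed point of the autonomous minorant map $x \mapsto x e^{(\inf_k \alpha_k) - x}$, and an orbit staying below this fixed point would be expanded by a uniform factor $k > 1$ at each step. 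Brouwer's theorem on $[\alpha, \beta]$ then furnishes a fixed point $y_1^*$ of $F$, and setting $y_k^* = f_{k-1}(y_{k-1}^*)$ produces the candidate $q$-cycle $\{y_n^*\}$.

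Global asymptotic stability is the main obstacle, and I would handle it by mimicking the derivative-product estimate from Lemma \ref{deriv} and Theorem \ref{per}. Since
\[
F'(y_0) = \prod_{k=0}^{q-1} f_k'(y_k) = \frac{y_q}{y_0} \prod_{k=0}^{q-1}(1 - y_k),
\]
it suffices to show that along any orbit eventually trapped in $[\alpha, \beta]$ the product $\left|\prod_{k}(1 - y_k)\right|$ decays geometrically. When $\max_k \alpha_k < 1 + \ln 2$ we have $\beta < 2$, so each factor lies strictly inside $(-1, 1)$ and the extreme value theorem yields a uniform contraction ratio $\delta_1 \in (0, 1)$. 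In the remaining regime $1 + \ln 2 \le \alpha_k < 2$, I would pair consecutive factors exactly as in Lemma \ref{deriv}: if $y_k \le 1$ then $|(1 - y_k)(1 - y_{k+1})| \le (1 - y_k)(y_k e^{2 - y_k} - 1) < 1$ via the elementary inequality $x(1 - x) < (2 - x) e^{x - 2}$, and if $y_k \ge 1$ then the function $\phi(y) = (y - 1)(y e^{2 - y} - 1)$ is maximized on $[1, 4]$ at $y = 2$ with $\phi(2) = 1$, giving a second ratio $\delta_2 \in (0, 1)$. These pairings produce $|(F^n)'(y_0)| \le C \delta^{n/2}$ uniformly on $[\alpha, \beta]$ with $\delta = \max\{\delta_1, \delta_2\}$, and the mean value theorem yields $|F^n(y_0) - y_1^*| \to 0$ geometrically for every $y_0 > 0$, which also forces uniqueness of the positive fixed point and hence global asymptotic stability of $\{y_n^*\}$.

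The sum identity is the easy part: taking logarithms of $y_{n+1}^* = y_n^* e^{\alpha_n - y_n^*}$ gives $\ln y_{n+1}^* - \ln y_n^* = \alpha_n - y_n^*$, and summing $n = 1, \ldots, q$ with the periodicity $y_{q+1}^* = y_1^*$ collapses the left side to $0$, yielding $\sum_{i=1}^{q} y_i^* = \sum_{i=1}^{q} \alpha_i$. The delicate point is that the pairing estimate cannot tolerate the boundary case $\alpha_k = 2$: the autonomous map $y \mapsto y e^{2 - y}$ undergoes a period-doubling bifurcation at $\alpha = 2$ and loses global stability, so the strict inequality $\alpha_k < 2$ is indispensable in the hypothesis.
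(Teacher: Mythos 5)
Your proof is essentially correct, but note that the paper itself offers no proof of Lemma \ref{skr}: it is stated as a known result and deferred entirely to Sacker's note \cite{sacker}. What you have done is therefore different in kind --- you reprove the lemma internally by observing that it is the $\mu_n\equiv 0$ specialization of the machinery already built for the second-order equation: the absorbing invariant interval of Lemma \ref{invar} (with the extra exponential term absent), Brouwer's theorem for a fixed point of the composition $F=f_{q-1}\circ\cdots\circ f_0$, the paired product estimate of Lemma \ref{deriv} for $(F^n)'$, and the mean value theorem step of Theorem \ref{per}; the sum identity by telescoping logarithms around one period is clean and correct. This buys self-containedness (the external citation could in principle be dropped), at the cost of some bookkeeping that you inherit from Lemma \ref{deriv} and ought to make explicit: the pairing of $|1-y_k|$ with $|1-y_{k+1}|$ is only needed, and only valid, when $y_{k+1}\geq 2$, and it is well defined precisely because $y_k\geq 2$ forces $y_{k+1}\leq y_k e^{2-y_k}\leq 2$, so two consecutive iterates cannot both exceed $2$ and the pairs are disjoint. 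Two small inaccuracies to repair: the fixed point of $F$ should be indexed $y_0^{\ast}$ rather than $y_1^{\ast}$, since $F$ carries $y_0$ to $y_q$; and the closing remark overstates what happens at the boundary --- for the autonomous Ricker map the fixed point at $\alpha=2$ is still globally attracting, merely non-hyperbolic, with the period-doubling occurring only for $\alpha>2$. What is true, and worth saying instead, is that the uniform contraction ratio $\delta<1$ genuinely requires the strict inequality $\alpha_k<2$.
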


\begin{theorem}
\label{peven}Let $\{a_{n}\}$ be periodic with minimal even period $p\geq2$ and
let $\sigma$ be as defined in (\ref{sig}).

(a) If $\sigma>0$ and $0<a_{2k-1}<2$ for $k=1,2,\ldots p/2$ then (\ref{pce})
has a globally attracting periodic solution $\{\bar{x}_{n}\}$ with period $p$
such that $\bar{x}_{2n-1}=0$ and $\bar{x}_{2n}$ is a sequence of period $p/2$
satisfying the equality%
\[
\sum_{i=1}^{p/2}\bar{x}_{2i-2}=\sum_{i=1}^{p/2}a_{2i-1}.
\]

(b) If $\sigma<0$ and $0<a_{2k-2}<2$ for $k=1,2,\ldots p/2$ then (\ref{pce})
has a globally attracting periodic solution $\{\bar{x}_{n}\}$ with period $p$
such that $\bar{x}_{2n}=0$ and $\bar{x}_{2n-1}$ is a sequence of period $p/2$
satisfying the equality%
\[
\sum_{i=1}^{p/2}\bar{x}_{2i-1}=\sum_{i=1}^{p/2}a_{2i-2}.
\]

\end{theorem}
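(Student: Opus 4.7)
The plan is to exploit the asymmetric collapse provided by Lemma \ref{1h}. Under the hypotheses of (a) the odd-indexed terms $x_{2n+1}$ already tend to $0$, so the even-indexed subsequence is governed by a perturbation of equation (\ref{fo}) in Lemma \ref{sol}(a). Sacker's Lemma \ref{skr}, applied to the period-$p/2$ coefficient sequence $\{a_{2n+1}\}\subset(0,2)$, then supplies a globally attracting $p/2$-cycle $\{u_n^{*}\}$ of the unperturbed equation (\ref{fo}), together with the sum identity $\sum_{i=1}^{p/2}u_{i}^{*}=\sum_{i=1}^{p/2}a_{2i-1}$. Setting $\bar x_{2n-1}=0$ and $\bar x_{2n}=u_{n}^{*}$ produces, via Lemma \ref{sol}(a), a genuine $p$-periodic solution of (\ref{pce}); the sum identity of the theorem then follows from Sacker's by relabeling indices and using periodicity of $\{u_n^{*}\}$. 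Thus only the global attractivity claim remains to be established.

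To establish convergence of an arbitrary solution to $\{\bar x_n\}$, rewrite the even-indexed recursion as $x_{2n+2}=x_{2n}\exp(a_{2n+1}-x_{2n}-\varepsilon_n)$ with $\varepsilon_n:=x_{2n+1}\to 0$. I would sample at one full period of the coefficients: let $\Phi=F_{p/2-1}\circ\cdots\circ F_0$ where $F_k(u)=u\exp(a_{2k+1}-u)$, and let $\Phi_n$ denote the analogous composition in which each factor carries the relevant vanishing perturbation. Lemma \ref{bd} keeps every orbit of (\ref{pce}) in a bounded set, and an invariant-interval argument patterned on Lemma \ref{invar} (now with $\gamma=0$ as the limiting cofactor since $\varepsilon_n\to 0$, but using $\inf a_{2k-1}>0$) keeps the even-indexed subsequence bounded away from $0$ from some index onward. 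On this compact positive invariant set $\Phi_n\to\Phi$ uniformly, while $\Phi$ has a unique positive fixed point that is a global attractor by Lemma \ref{skr}. A standard asymptotically autonomous argument then forces the $\omega$-limit set of the perturbed iteration into that fixed point, giving $x_{2n}\to u_n^{*}$ in each residue class of $n\pmod{p/2}$; combined with $x_{2n+1}\to 0$ this is the claimed convergence.

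Part (b) is entirely symmetric: interchange the roles of even and odd indices and use the $\sigma<0$ branch of Lemma \ref{1h}, Lemma \ref{sol}(b), and Lemma \ref{skr} applied to the period-$p/2$ sequence $\{a_{2n+2}\}\subset(0,2)$. The main obstacle is the perturbation step in the middle paragraph: transferring global asymptotic stability from the autonomous period-$p/2$ map $\Phi$ to the asymptotically periodic nonautonomous iteration $\Phi_n$ requires careful compact-invariance bookkeeping to prevent orbits from drifting toward $0$ (the other fixed point of $F_k$) or toward the maximum of $\Phi$ before the perturbation has decayed. Everything else is assembly of the named lemmas and routine manipulation of the sum identity.
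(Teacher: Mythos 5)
Your construction of the limiting $p$-periodic solution (Lemma \ref{1h} to kill one parity class, Lemma \ref{skr} applied to the period-$p/2$ sequence of odd-indexed coefficients, Lemma \ref{sol} to splice in the zeros, and index relabeling for the sum identity) is exactly the paper's first step. Where you genuinely diverge is the global-attractivity step. The paper works directly with the \emph{perturbed} maps $F_n(x)=xe^{\rho_n-x-\mu_n xe^{-x}}$, forms the compositions $\xi_n=F_n\circ\cdots\circ F_0$, and shows $|\xi_n'|\le \frac{\beta}{\alpha}KM\delta^{[n/2]}\to 0$ by splitting the derivative product into the factors $(1-y_i)$ (controlled by the argument of Lemma \ref{deriv}) and the factors $(1-\mu_i y_i e^{-y_i})$ (eventually in $(0,1]$ since $\mu_i=t_{2i+1}\to 0$), then concludes by the mean value theorem. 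You instead treat the even-indexed recursion as an asymptotically autonomous perturbation of (\ref{fo}), pass to the period map $\Phi$, and invoke uniform convergence $\Phi_n\to\Phi$ on a compact invariant interval together with the global asymptotic stability furnished by Lemma \ref{skr}. Both routes work once the compact-invariance bookkeeping you flag is done (and it is the same bookkeeping the paper needs: boundedness of $\{t_{2n+1}\}$, which holds since $t_{2n+1}\to 0$ for $\sigma>0$, and the adaptation of Lemma \ref{invar} with $\rho=\inf_k a_{2k-1}$). The trade-off: the paper's derivative estimate yields a geometric convergence rate, while your soft limiting argument gives no rate but is arguably cleaner -- in particular it sidesteps the delicate point in the paper's computation where $\{u_n^{\ast}\}$ is treated as an orbit of the perturbed maps $F_n$ (the paper writes $F_n(u_n^{\ast})=u_{n+1}^{\ast}$, which is only true for the unperturbed equation); your comparison is made against the limiting system, where that identity is exact.
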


\begin{proof}
We prove part (a) and part (b) is demonstrated similarly. By Lemma \ref{skr}
the equation in (\ref{fo}) has a periodic solution of period $p/2$ given by
$\{u_{i}^{\ast}\}$ with $0\leq i\leq p/2-1$. By Lemma \ref{sol}, the sequence
$\{u_{0}^{\ast},0,u_{1}^{\ast},0,\cdots,u_{p/2-1}^{\ast},0\}$ is a $p$
periodic solution of (\ref{pce}) This means that $\bar{x}_{2n-1}=0$ and
$\bar{x}_{2n}=u_{i}^{\ast}$ with%
\[
\sum_{i=1}^{p/2}\bar{x}_{2i-2}=\sum_{i=1}^{p/2}u_{i}^{\ast}=\sum_{i=1}%
^{p/2}a_{2i-1}.
\]

Let the even indexed terms of the solution $\{x_{n}\}$ be defined as in
(\ref{yn}) and for each $n\geq0$, define
\[
F_{n}(x)=xe^{\rho_{n}-x-\mu_{n}xe^{-x}}%
\]

Then $F_{n}(u_{n}^{\ast})=u_{n+1}^{\ast}$. Now observe that with $\xi
_{n}=F_{n}\circ F_{n-1}\circ\cdots\circ F_{0}$
\[
F_{n}(y_{n})=F_{n}(F_{n-1}(y_{n-1}))=F_{n}(F_{n-1}(\cdots F_{0}(y_{0}%
))\cdots)=\xi_{n}(y_{0})
\]

Also note that%
\[
|\xi_{n}^{\prime}(y_{0})|=\left\vert \prod_{i=0}^{n}e^{\rho_{i}-y_{i}-\mu
_{i}y_{i}e^{-y_{i}}}(1-\mu_{i}y_{i}e^{-y_{i}})(1-y_{i})\right\vert
\]

Since $\mu_{n}\rightarrow0$, for sufficiently large $N$, $0<(1-\mu_{n}%
y_{n}e^{-y_{n}})\leq1$ for $n\geq N$. Then there exists a constant $M>0$ so
that%
\[
\left\vert \prod_{i=0}^{n}(1-\mu_{i}y_{i}e^{-y_{i}})\right\vert \leq\left\vert
\prod_{i=0}^{N}(1-z_{i+1})\right\vert \leq M
\]

Proceeding now as in the proof of Lemma \ref{deriv}, if we let $m=[n/2]$, we
can find constants $K>0$ and $\delta\in(0,1)$ so that%
\[
\left\vert \prod_{i=0}^{n}(1-y_{i})\right\vert \leq K\delta^{m}%
\]

Therefore,%
\[
|\xi_{n}^{\prime}(y_{0})|=\frac{y_{n+1}}{y_{0}}\left\vert \prod_{i=0}%
^{n}(1-\mu_{i}y_{i}e^{-y_{i}})(1-y_{i})\right\vert \leq\frac{\beta}{\alpha
}KM\delta^{m}%
\]

Finally,
\[
|y_{n+1}-u_{n+1}^{\ast}|=|F_{n}(y_{n})-F_{n}(u_{n}^{\ast})|=|\xi_{n}%
(y_{0})-\xi_{n}(u_{0}^{\ast})|=|\xi^{\prime}(w)||y_{0}-u_{0}^{\ast}|\leq
\frac{b}{\alpha}KM\delta^{m}|y_{0}-u_{0}^{\ast}|\rightarrow0
\]
as $n\rightarrow\infty$ which completes the proof.
\end{proof}

\medskip

\begin{remark}
1. In Theorem \ref{peven}(a) the even-indexed terms $a_{2k}$ are not
restricted to (0,2) as long as $\sigma>0$, i.e.%
\[
a_{1}+a_{3}+\cdots+a_{p-1}>a_{0}+a_{2}+\cdots+a_{p-2}%
\]

This imposes an upper bound $a_{2k}<2(p/2)=p$ for each $k$ but clealy some
$a_{2k}$ may exceed 2. Similarly, in (b) the odd-indexed terms are not
restricted to (0,2) as long as $\sigma<0$.

2. Note that $2p$ is not a minimal period for $\{\bar{x}_{n}\}.$ For example,
if $p=4$ with $a_{1}=a_{3}$ and $2a_{1}>a_{0}+a_{2}$ (so that $\sigma>0$) then
$\bar{x}_{2n-1}$ satisfies (\ref{sack}) with constant $\rho_{n}.$ In this
case, Lemma \ref{skr} yields a globally asymptotically stable fixed point for
(\ref{sack}), and thus a globally attracting period two solution for
(\ref{pce}).
\end{remark}

Figure \ref{Fig7} illustrates Theorem \ref{peven} with $p=4$ and 
$$a_{0}=1.4,\quad a_{1}=1.8,\quad a_{2}=1.6,\quad a_{3}=0.3$$

\begin{figure}[tbp] 
  \centering
  \includegraphics[width=5.02in,height=2.64in,keepaspectratio]{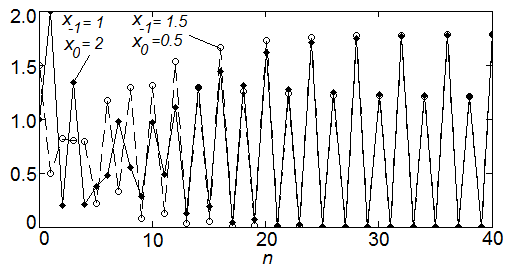}
  \caption{Solutions converging to a single 4-cycle with parameter period p=4}
  \label{Fig7}
\end{figure}

In the boundary special case $\sigma=0$, the solutions of (\ref{pce}) have entirely
different dynamics that resemble the odd period case. Indeed, the next result
is similar to Theorem \ref{oddgen}.

\begin{corollary}
Suppose that $\{a_{n}\}$ is periodic with minimal even period $p$ and
$\sigma=0.$ Let $f$ be the interval map in Lemma \ref{orb} where $t_{0}>0$ is
a fixed real number and $t_{k}$ is given by (\ref{gen1})-(\ref{gen2}) for
$k\geq1$.

(a) If $s$ is a periodic point of $f$ with period $\omega$ then all solutions
of (\ref{pce}) with initial values $x_{-1}=s$ and $x_{0}=t_{0}se^{-s}$ (i.e.
$(x_{-1},x_{0})$ is on the curve $g_{0}$) have period $p\omega.$

(b) If the map $f$ has a non-periodic point, then (\ref{pce}) has a
non-periodic solution.

(c) If $f$ has a period-three point then (\ref{pce}) has periodic solutions of
period $pn$ for all positive integers $n$ as well as chaotic solutions in the
sense of Li-Yorke \cite{LY}.
\end{corollary}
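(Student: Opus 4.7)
The plan is to reduce this Corollary to a direct application of Lemmas \ref{orb} and \ref{pq}, exactly as Theorem \ref{oddgen} was, with the only change being that in the present (even period, $\sigma=0$) case the period of $\{t_n\}$ is $p$ rather than $2p$.

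The first step is to record the relevant fact about $\{t_n\}$. By Lemma \ref{pet}, when $p$ is even the sequence $\{t_n\}$ generated by (\ref{star1}) is periodic with period $p$ if and only if $\sigma=0$. Therefore, under the present hypotheses, $\{a_n\}$ and $\{t_n\}$ share the common period $q=p$, so both Lemmas \ref{orb} and \ref{pq} apply with $q=p$, and the map $f=g_{p-1}\circ g_{p-2}\circ\cdots\circ g_0$ coincides with the $f$ mentioned in the statement.

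Parts (a) and (b) are then immediate. For (a), Lemma \ref{pq}(a) with $q=p$ gives a solution of (\ref{pce}) of period $\omega q=p\omega$ starting from the prescribed initial values $x_{-1}=s$, $x_0=t_0 s e^{-s}=g_0(s)$; this is precisely the claim. For (b), Lemma \ref{pq}(b) with $q=p$ shows that any non-periodic point of $f$ produces a non-periodic solution $\{x_n\}$ of (\ref{pce}). For (c), the Li--Yorke theorem (\cite{LY}) applied to the one-dimensional continuous map $f$ guarantees that a period-three point of $f$ forces periodic points of every period $n\geq 1$ together with an uncountable scrambled set (Li--Yorke chaos). Feeding each periodic point of period $n$ through part (a) produces a solution of (\ref{pce}) of period $pn$, while a Li--Yorke chaotic orbit of $f$ produces, through Lemma \ref{orb}, a chaotic solution of (\ref{pce}), completing the proof.

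There is essentially no obstacle here beyond matching the correct value of $q$: the heavy lifting was already performed in Lemma \ref{pet} (identifying $\sigma=0$ as the precise condition for $\{t_n\}$ to have period $p$ in the even case) and in Lemma \ref{pq} (translating periodic/non-periodic orbits of $f$ into periodic/non-periodic solutions of (\ref{pce})). The only subtlety worth pointing out is that Lemma \ref{pq} is stated with $q$ as the minimal period of $\{t_n\}$, whereas Lemma \ref{pet} only asserts that $p$ is \emph{a} period; but this causes no issue, since if the true minimal period $q'$ of $\{t_n\}$ divides $p$, then $\omega q'$ still divides $\omega p$, and the conclusions of Lemma \ref{pq} apply verbatim with $q=p$.
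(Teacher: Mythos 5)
Your proposal is correct and matches the paper's own (very short) proof, which likewise just invokes Lemma \ref{pet} to get that $\{t_n\}$ has period $p$ when $\sigma=0$ and then applies Lemma \ref{pq} (together with the Li--Yorke theorem for part (c), as in Theorem \ref{oddgen}). Your extra remark about minimal versus non-minimal period of $\{t_n\}$ is a reasonable point of care that the paper glosses over, but the substance and route are the same.
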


\begin{proof}
By Lemma \ref{pet} $\{t_{n}\}$ has period $p$ so the application of Lemma
\ref{pq} completes the proof.
\end{proof}

\medskip

If $p=2$ in the above corollary then $a_{0}=a_{1}$ (because $\sigma=0$) so the
sequence $\{a_{n}\}$ is constant, i.e. it has mimimal period 1 not 2. The
behavior described in the corollary is indeed that which is observed for the
constant parameter case; see \cite{LS} for a discussion of the stability of
the variety of solutions mentioned above, which is of the same type as noted
in Remark \ref{ms}.

\section{Summary and future directions}

We used a semiconjugate factorization of (\ref{pce}) to investigate its
dynamics. Semiconjugate factorizations for difference equations of exponential
type are not generally known (unlike linear equations) but fortunately we have
one for (\ref{pce}). As we see above, the decomposition of (\ref{pce}) into
the system (\ref{star1})-(\ref{star2}) of first-order equations makes it clear
why the solutions of (\ref{pce}) behave differently in a fundamental way
depending on whether the period of $\{a_{n}\}$ is odd or even: in the former
case the sequence $\{t_{n}\}$ is periodic, hence bounded while the latter case
$\{t_{n}\}$ is unbounded when $\sigma\not =0.$

The main results of this paper are Corollary \ref{op} and Theorem \ref{oddgen}
that discuss the dynamics of solutions if $p$ is odd and Theorem \ref{peven}
if $p$ is even. Corollary \ref{op} and Theorem \ref{oddgen} show that
(\ref{pce}) has multistable coexisting solutions, including non-periodic and
chaotic solutions if the amplitude of the parameter sequence $a_{n}$ is
unrestricted. Theorem \ref{peven} indicates a completely different dynamics
where globally stable limit cycles occur when $a_{n}$ is restricted to the
interval (0,2). Another of our main results is Theorem \ref{per} that extends
previous special cases in \cite{FHL}\ and \cite{LS}. Further, Corollary
\ref{op} is a consequence of Theorem \ref{per}.

An extension of Theorem \ref{peven} that includes non-periodic solutions when
$a_{n}$ exceeds 2 for some indices $n$ is expected and may be of future
interest. Such an extension evidently proves the existence of asymptotically
stable non-periodic solutions, including chaotic solutions for (\ref{pce})
when $\sigma\not =0$.

A natural extension of the above results is not obvious for higher order
versions of (\ref{pce}) such as
\begin{equation}
x_{n+1}=x_{n-1}e^{a_{n}-x_{n}-x_{n-k}} \label{kth}%
\end{equation}

For instance, (\ref{kth}) may have unbounded solutions if $k\geq2$ and exhibits
different dynamics than (\ref{pce}). Further, known semicongjugate
factorizations for (\ref{kth}) decompose it into a factor equation with order
at least 2 if $k\geq2$; see \cite{FSOR}. Such an equation is less tractable
than the first-order case studied above. A detailed study of difference
equations such as (\ref{kth}) and similar with periodic $\{a_{n}\}$ may yield
interesting and possibly unexpected results.

\end{document}